\newcommand{\OO}{\mathcal O}
\newcommand{\Z}{\mathbb Z}
\newcommand{\N}{\mathbb N}
\newcommand{\Hom}{\operatorname{Hom}}
\newcommand{\End}{\operatorname{End}}
\newcommand{\GL}{\operatorname{GL}}
\newcommand{\id}{\operatorname{id}}
\newcommand{\Ext}{{\operatorname{Ext}}}
\newcommand{\Inn}{\operatorname{Inn}}
\newcommand{\Aut}{\operatorname{Aut}}
\newcommand{\Out}{\operatorname{Out}}
\newcommand{\opp}{\operatorname{op}}
\newcommand{\rank}{\operatorname{rank}}
\newcommand{\UU}{\mathcal {U}}
\newcommand{\Pic}{{\operatorname{Pic}}}
\newtheorem{defi}{Definition}[section]
\newtheorem{remark}[defi]{Remark}
\newtheorem{thm}[defi]{Theorem}
\newtheorem{lemma}[defi]{Lemma}
\newtheorem{corollary}[defi]{Corollary}
\newtheorem{prop}[defi]{Proposition}
\newtheorem*{question}{Question}
\newcommand\blfootnote[1]{%
  \begingroup
  \renewcommand\thefootnote{}\footnote{#1}%
  \addtocounter{footnote}{-1}%
  \endgroup
}
\address{Florian Eisele \newline Department of Mathematics, City, University of London,
	Northampton Square, London EC1V 0HB, United Kingdom \newline E-mail address: {\tt Florian.Eisele@city.ac.uk}}
\title{The Picard group of an order and K\"ulshammer reduction}
\author{Florian Eisele}
\begin{document}
	\begin{abstract}
		Let $(K,\OO,k)$ be a $p$-modular system and assume $k$ is algebraically closed.
		We show that if $\Lambda$ is an $\OO$-order in a separable $K$-algebra, then $\Pic_\OO(\Lambda)$ carries the structure of an algebraic group over $k$. As an application to the modular representation theory of finite groups, we show that a reduction theorem by K\"ulshammer concerned with Donovan's conjecture remains valid over $\OO$.
	\end{abstract}

	\maketitle
	\blfootnote{The author is supported by EPSRC grant EP/M02525X/1.}

	\section{Introduction}

	Let $(K,\OO, k)$ be a $p$-modular system for some prime $p>0$, and assume that $k$ is algebraically closed. 
	It is well-known and easy to prove that the automorphisms group $\Aut_k(A)$ of a finite-dimensional $k$-algebra $A$ is an algebraic group. The same is true for the group of outer automorphisms $\Out_k(A)$, and therefore also for the Picard group $\Pic_k(A)$ as well as $\operatorname{Picent}(A)$, as both of these contain the identity component of $\Out_k(A)$ as a subgroup of finite index. However, not much can be said about the structure of these algebraic groups in general. The class of algebras we are interested in most in this article are blocks of finite group algebras, which have the additional property that they can be defined over both $k$ and $\OO$. In the known examples, the Picard group of a block defined over $\OO$ turns out to be much smaller than the Picard group of the corresponding block defined over $k$. For instance, the ``$F^*$-theorem'' (see \cite{HertweckKimmerle}) shows that if $N$ is a normal $p$-subgroup of a finite group $G$ such that $C_G(N) \subseteq N$, then $\Pic_\OO(\OO G)$ is finite. In the same vein, using a result of Weiss (see \cite[Theorem 2]{WeissRigidity}) one can show that if $G$ has a normal Sylow $p$-subgroup, then $\Pic_{\OO}(\OO G)$ is finite. The finiteness of the Picard group is preserved by derived equivalences, and therefore all blocks of abelian defect (defined over $\OO$) should have finite Picard groups, provided Brou\'{e}'s abelian defect group conjecture holds. 
	With all of this in mind, interest in the structure of $\Pic_\OO(\OO G)$ for finite groups $G$ has recently resurged. In particular \cite{BoltjeKessarLinckelmann} studies certain finite subgroups of $\Pic_{\OO}(\OO G)$, but leaves open the question whether these finite subgroups are proper. One of the emerging questions on Picard groups of block algebras is therefore:
	\begin{question}
		For which finite groups $G$ is $\Pic_{\OO}(\OO G)$ finite?
	\end{question}
	Of course we could ask the same question for individual blocks. To the best of the author's knowledge there are no known examples where $\Pic_{\OO}(\OO G)$ turns out to be infinite. 
	To approach the question above, one could first try to endow $\Pic_{\OO}(\OO G)$ with additional structure. That is what we do in this article, with the added benefit of an immediate application to modular representation theory. Our main result is the following structural theorem on Picard groups of more general $\OO$-orders:
	\begin{thm}[see Theorem~\ref{thm_out_algebraic}]\label{thm main 1}
		If $\Lambda$ is an $\OO$-order in a separable $K$-algebra, then $\Out_\OO(\Lambda)$ has the structure of an algebraic group over $k$.
	\end{thm}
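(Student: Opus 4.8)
The plan is to realize $\Out_\OO(\Lambda)$ as the group of $k$-rational points of an algebraic group by transporting it to a finite truncation $\Lambda/\pi^N\Lambda$ and applying the Greenberg functor. \emph{Step 1: the reduction map on outer automorphism groups is injective for $N\gg 0$.} I would first produce an integer $N$, depending only on $\Lambda$, such that the natural homomorphism $\rho_N\colon\Out_\OO(\Lambda)\to\Out_{\OO/\pi^N}(\Lambda/\pi^N\Lambda)$ is injective. Given $\alpha\in\Aut_\OO(\Lambda)$ whose reduction modulo $\pi^N$ is inner, one lifts the conjugating unit --- possible since $\Lambda$ is $\pi$-adically complete --- and composes with the corresponding inner automorphism to reduce to the case $\alpha\equiv\id\pmod{\pi^N\Lambda}$. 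For $N$ large this forces $\alpha$ to fix $Z(\Lambda)$ pointwise, because $\Aut_\OO(Z(\Lambda))$ is a finite group and therefore has trivial congruence subgroup modulo $\pi^N$ once $N$ is large; hence $\alpha_K\in\Aut_K(A)$ fixes $Z(A)$ pointwise and so is inner by Skolem--Noether. Now ${}_1\Lambda_\alpha$ and $\Lambda$ are lattices over the order $\Lambda\otimes_\OO\Lambda^\opp$ inside the separable algebra $A\otimes_K A^\opp$; they have isomorphic $K$-spans (as $\alpha_K$ is inner) and are isomorphic modulo $\pi^N$ (as $\alpha\equiv\id$), so Maranda's theorem gives ${}_1\Lambda_\alpha\cong\Lambda$ as bimodules once $N$ exceeds a bound read off from the conductor of $\Lambda\otimes_\OO\Lambda^\opp$ in a maximal order --- which says exactly that $\alpha$ is inner.

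\emph{Step 2: the truncation is an algebraic group and the image of $\Out_\OO(\Lambda)$ is closed in it.} Let $\underline{\Aut}$ be the automorphism group scheme of $\Lambda$ over $\OO$ --- a closed subgroup scheme of $\GL(\Lambda)$, hence affine and of finite type over $\OO$ --- and let $\underline{\Lambda^\times}$ be the unit group scheme, a smooth open subscheme of the affine space underlying $\Lambda$. For each $m$, the Greenberg transform of $\underline{\Aut}\times_\OO\OO/\pi^m$ is an affine algebraic group $\mathcal A_m$ over $k$ with $\mathcal A_m(k)=\Aut_{\OO/\pi^m}(\Lambda/\pi^m\Lambda)$; the surjections $\OO/\pi^{m+1}\twoheadrightarrow\OO/\pi^m$ induce homomorphisms $\mathcal A_{m+1}\to\mathcal A_m$ of algebraic groups, and conjugation induces a homomorphism from the Greenberg transform of $\underline{\Lambda^\times}\times_\OO\OO/\pi^m$ into $\mathcal A_m$. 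The key point is that the image $I_m$ of $\Aut_\OO(\Lambda)=\underline{\Aut}(\OO)$ in $\mathcal A_m(k)$ is a \emph{closed} subgroup: by Greenberg's approximation theorem this image equals, for a uniform $M_0$ independent of $m$, the image on $k$-points of the algebraic-group homomorphism $\mathcal A_{m+M_0}\to\mathcal A_m$, and over $k=\bar k$ the image of a homomorphism of algebraic groups is closed. The inner automorphisms of $\Lambda/\pi^m\Lambda$ are the image of the unit-group homomorphism, hence a closed normal subgroup $\mathcal I_m$ of $\mathcal A_m$, and $\mathcal I_m\subseteq I_m$ since units of $\Lambda/\pi^m\Lambda$ lift to $\Lambda$. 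Thus $\mathcal G_m:=I_m/\mathcal I_m$ is an algebraic group over $k$, and, because $k$ is algebraically closed, $\mathcal G_m(k)$ is exactly the image of $\Out_\OO(\Lambda)$ in $\Out_{\OO/\pi^m}(\Lambda/\pi^m\Lambda)$.

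\emph{Step 3: conclusion and the main obstacle.} Taking $N$ as in Step 1, the homomorphism $\Out_\OO(\Lambda)\to\mathcal G_N(k)$ is injective by Step 1 and surjective by the definition of $\mathcal G_N$, hence an isomorphism; transporting the algebraic-group structure of $\mathcal G_N$ back along it proves the theorem. The step I expect to be the genuine obstacle is the closedness claim in Step 2: automorphisms of $\Lambda$ need not lift from $\Lambda/\pi^m\Lambda$ (the obstruction lives in $\operatorname{HH}^2$), so the maps $\mathcal A_{m+1}\to\mathcal A_m$ are not surjective and it is not at all clear a priori that $\{\alpha\bmod\pi^m:\alpha\in\Aut_\OO(\Lambda)\}$ is more than a countable dense subgroup of $\mathcal A_m$; what saves the day is Greenberg's theorem --- the uniform Mittag--Leffler property for the $\OO/\pi^m$-points of a finite-type $\OO$-scheme --- together with the formalism of the Greenberg functor, which in the mixed-characteristic case requires working with Witt vectors rather than truncated polynomial rings. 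A secondary point to be careful about is that Maranda's theorem is being used in a form for lattices over the order $\Lambda\otimes_\OO\Lambda^\opp$, so one should check that $A\otimes_K A^\opp$ is again separable (it is, since separability is stable under $-\otimes_K-$ and under passing to the opposite algebra).
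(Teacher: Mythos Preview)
Your proof is correct and follows the same three–step architecture as the paper: (i) injectivity of $\Out_\OO(\Lambda)\to\Out_\OO(\Lambda/\pi^N\Lambda)$ via Maranda applied over $\Lambda\otimes_\OO\Lambda^{\opp}$, (ii) identification of the image with the image of a finite-level transition map, hence closed, and (iii) algebraic-group structure on the truncations. The differences are in the machinery you import. Where the paper builds the algebraic-group structure on $\Aut_{\OO}(\Lambda/p^n\Lambda)$ by hand from Witt vectors (its Section~2), you invoke the Greenberg functor; these are literally the same construction, the paper's version being an explicit unravelling in the case at hand. More substantively, for the closedness of the image --- the point you correctly single out as the crux --- the paper proves an explicit lifting lemma of Higman (its Lemma~3.4): for $s\geq d(\Lambda)$ the image of $\Aut_\OO(\Lambda)$ in $\Aut_\OO(\Lambda/\pi^{s+1}\Lambda)$ coincides with that of $\Aut_\OO(\Lambda/\pi^{2s+1}\Lambda)$, via a direct $HH^2$-obstruction argument. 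You instead appeal to Greenberg's approximation theorem for the finite-type $\OO$-scheme $\underline{\Aut}$. Higman's lemma is in effect a specialised instance of Greenberg approximation with an explicit constant, so the two arguments are equivalent in spirit; the paper's route is self-contained and yields effective bounds (useful later for the K\"ulshammer application), while yours is shorter but imports a substantial theorem.

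Two minor remarks. In Step~1 the detour through Skolem--Noether is unnecessary: Maranda's theorem over $\Lambda\otimes_\OO\Lambda^{\opp}$ already gives ${}_1\Lambda_\alpha\cong\Lambda$ from congruence modulo $\pi^N$ alone, without first checking that the $K$-spans agree (this is how the paper argues). In Step~2 your phrasing ``uniform $M_0$ independent of $m$'' overstates Greenberg's theorem, which only gives a bound of the form $cm+s$; this is harmless here since you only need the statement at the single level $m=N$ fixed in Step~1.
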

	Note that we use the term ``algebraic group'' in the classical sense, rather than as a synonym of ``group scheme''. The question whether we can define a (possibly non-reduced) ``outer automorphism group scheme'' of an $\OO$-order is interesting, but we do not consider it in this article. 
	The proof of Theorem~\ref{thm main 1} combines the theory of Witt vectors with theorems of Maranda and Higman on lifting from $\OO/p^n \OO$ for large $n$ to $\OO$. 
	
	Theorem~\ref{thm main 1} suggests a different approach to the aforementioned finiteness question. Namely, it is sufficient to determine the Lie algebra of $\Pic_\OO(\Lambda)$, and finiteness follows if it is zero-dimensional. For a finite-dimensional $k$-algebra $A$, the Lie algebra of $\Out_k(A)$ embeds into the first Hochschild cohomology group $HH^1(A)$, which can be identified with the Lie algebra of the ``outer automorphism group scheme'' of $A$. It is tempting to ask whether a similar relationship exists between the Lie algebra of $\Pic_\OO(\Lambda)$ and $HH^1(\Lambda)$ (which vanishes if $\Lambda$ is a block). However, these questions are outside of the scope of this short article. Instead, we focus our attention on an application in modular representation theory which follows from Theorem~\ref{thm main 1} in a fairly straightforward manner, a generalisation of a theorem of K\"ulshammer (see \cite{KuelshammerDonovan}):
	\begin{thm}[see Theorem~\ref{thm kuelshammer full} and Corollary~\ref{corollary finite upstairs finite downstairs}]\label{thm kuelshammer over o}
		Let $D$ be a finite $p$-group. Then the following two statements are equivalent:
		\begin{enumerate}
		\item Donovan's conjecture for blocks of defect $D$ (defined over $\OO$): there are only finitely many Morita equivalence classes of blocks with defect group $D$.
		\item There are only finitely many Morita equivalence classes of blocks $B$ of $\OO H$ for finite groups $H$ with the property that
		\begin{enumerate}
		\item $B$ has defect group isomorphic to $D$.
		\item $H=\langle D^h \ | \ h \in H \rangle$, where we have identified $D$ with the defect group of $B$.
		\end{enumerate}
		\end{enumerate}
	\end{thm}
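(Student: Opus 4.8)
The implication (1)$\Rightarrow$(2) is immediate: every block occurring in (2) has defect group isomorphic to $D$, so if the set in (1) is finite then so is the smaller set in (2). The content is the reverse implication, and the plan is to run K\"ulshammer's original argument over $k$ (from \cite{KuelshammerDonovan}) essentially line by line, feeding in Theorem~\ref{thm main 1}---and its consequence that $\Pic_\OO$ of a block is an algebraic group---at each point where K\"ulshammer uses the algebraicity of $\Out_k$ or $\Pic_k$ of a block.

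So assume (2), and let $B$ be a block of $\OO G$ with defect group $D$ for some finite group $G$; we must bound the Morita equivalence class of $B$. By the Clifford theory of blocks over $\OO$---Kn\"orr's theorem on the defect groups of covered blocks, together with a Fong--Reynolds reduction to an inertial subgroup---we reduce to the situation in which the normal closure $N := \langle D^g \mid g \in G\rangle$ of $D$ carries a $G$-stable block $b$ covered by $B$ with defect group $D$. Then $N$ is generated by the conjugates $D^n$, $n \in N$, so $b$ is one of the blocks occurring in (2); by hypothesis there are only finitely many such $b$ up to Morita equivalence. It therefore suffices to bound, for each fixed Morita class of $b$, the number of Morita classes of the blocks $B$ that arise over it.

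Since $b$ is $G$-stable, $\OO G b$ is a crossed product of $b$ with $\Delta := G/N$, and $B$ is one of its blocks. Its Morita class is then controlled by: the Morita class of $b$, the conjugacy class of a homomorphism $\varphi \colon \Delta \to \Pic_\OO(b)$ (the action), and a cohomological datum recording how $B$ sits over $b$ (which lies in a finite group, as $\Delta$ is finite). The condition $D \subseteq N$ forces the block of the twisted group algebra $\OO_\alpha[\Delta]$ corresponding to $B$ to have trivial defect group, hence to be a matrix algebra over $\OO$; as tensoring with such a matrix algebra does not affect Morita classes, this lets one replace $\Delta$ by the finite subgroup $\Sigma := \varphi(\Delta) \le \Pic_\OO(b)$. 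By Theorem~\ref{thm main 1}, $H := \Pic_\OO(b)$ is an algebraic group over $k$, and it depends only on the (finitely many) Morita classes of $b$; once this is known, K\"ulshammer's argument via algebraic group actions applies and shows that only finitely many Morita classes of such crossed products occur---the key points being that $\pi_0(H)$ is finite, that the cohomological datum lies in a finite group once $\Sigma$ is fixed, and that the defect-group condition is precisely the rigidity needed to pin $\Sigma$ down to finitely many conjugacy classes in $H$. The quantitative form of this is Corollary~\ref{corollary finite upstairs finite downstairs}.

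The main obstacle is making that last step genuinely work over $\OO$ rather than over $k$. K\"ulshammer's Clifford-theoretic input---Fong reductions, the crossed-product description of $\OO G b$, the reduction to a finite subgroup of the Picard group---transfers to $\OO$ without real difficulty, as $\OO$ is a complete discrete valuation ring with algebraically closed residue field. The delicate point is to confirm that K\"ulshammer's use of the algebraic group structure survives the passage from $k$ to $\OO$: one must analyse how finite subgroups $\Sigma$ of $\Pic_\OO(b)$ that arise from actions of $\Delta$ subject to the defect-group constraint can sit inside this algebraic group, using both the finiteness of its component group and the rigidity of its identity component---which over $\OO$ is typically far smaller than the analogous group over $k$, cf.\ the $F^*$-theorem and Weiss's theorem discussed above---to conclude that only finitely many of them occur up to conjugacy. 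Theorem~\ref{thm main 1} is exactly what makes this possible.
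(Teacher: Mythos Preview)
Your outline has the right shape—run K\"ulshammer's reduction over $\OO$ and invoke Theorem~\ref{thm main 1} where K\"ulshammer uses algebraicity of $\Out_k$—but two of the steps you sketch are not actually carried out, and the paper handles them differently.

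First, after reducing to a $G$-stable block $b$ of $\OO N$ with $N=\langle D^g\rangle$, you propose to ``replace $\Delta=G/N$ by $\Sigma=\varphi(\Delta)\le\Pic_\OO(b)$'' on the grounds that the relevant block of the twisted group algebra of $\Delta$ is a matrix algebra. That inference is not justified: knowing this block is a matrix algebra does not by itself exhibit $B$ as (Morita equivalent to) a crossed product of $b$ with the smaller group $\Sigma$. The paper instead inserts an intermediate normal subgroup $K$, namely the kernel of $G\to\Out_k(k\otimes_\OO B_H)$ (note: the target is over $k$), shows via Corollary~\ref{corollary nat morita} that the covered block $B_K$ of $\OO K$ is \emph{naturally} Morita equivalent to $B_H$, so $B_K\cong B_H^{n\times n}$ as $\OO$-algebras, and then applies Corollary~\ref{corollary crossed prod condens} to write $B\cong\Gamma^{n\times n}$ with $\Gamma$ a crossed product of $B_H$ and $X=G/K$. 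This is the substance of Theorem~\ref{thm kuelshammer full}.

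Second, and more seriously, you never bound $|\Sigma|$. You assert that ``the defect-group condition is precisely the rigidity needed to pin $\Sigma$ down to finitely many conjugacy classes in $H$'', but this is the point that has to be proved: an algebraic group over $k$ can contain infinitely many conjugacy classes of finite subgroups (already $\mathbb G_a$ does), so finiteness of $\pi_0$ and smallness of the identity component are not enough. In the paper's route one gets, from K\"ulshammer's original argument over $k$ (which is coefficient-independent), that $X=G/K$ is a $p'$-group of order dividing $|\Out(D)|^2$. With $X$ ranging over a \emph{fixed finite} list of groups, what one needs is finiteness of the set of $\OO$-linear crossed products of a fixed basic order $\Lambda_0$ with a fixed finite group—i.e.\ finitely many $\Out_\OO(\Lambda_0)$-conjugacy classes of homomorphisms $X\to\Out_\OO(\Lambda_0)$ together with finiteness of $H^2(X,\UU(Z(\Lambda_0)))$—and \emph{that} is where Theorem~\ref{thm_out_algebraic} is used (Corollary~\ref{corollary finitely many crossed prod}). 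Note also that the relevant target throughout is $\Out_\OO$, not $\Pic_\OO$: crossed products are parametrised by outer automorphisms and central $2$-cocycles, and Theorem~\ref{thm main 1} is a statement about $\Out_\OO$.
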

	Theorem~\ref{thm kuelshammer over o} was the original motivation for Theorem~\ref{thm main 1}, as the fact that the results of \cite{KuelshammerDonovan} were not known to hold over $\OO$ prevented them from being combined with the results on ``strong Frobenius numbers'' in  \cite{EatonLiveseyTowardsDonovan}, which only work over $\OO$. 
	This was one of the obstacles in reducing Donovan's conjecture for blocks of abelian defect (defined over $\OO$) to blocks of quasi-simple groups, which will be the subject of \cite{EatonEiseleLivesey} (in preparation). 

	\section{Witt vectors and $\Out_\OO(\Lambda/p^n\Lambda)$ as an algebraic group}

	\begin{defi}[{Witt vectors, cf. \cite[Chapter II \S 6]{SerreLocalFields}}]
		Let $k$ be a commutative ring.
		\begin{enumerate}
		\item Define
		$$
			W(k) = \prod_{\Z_{\geq 0}} k
		$$
		We can define addition and multiplication on $W(k)$ in such a way that $W(k)$ becomes a commutative ring, and for each $i\geq 0 $ there are polynomials 
		$\sigma_i, \mu_i \in k[X_0, \ldots, X_i, Y_0, \ldots, Y_i]$ such that
		$$
			(u+v)_i = \sigma_i(u_0,\ldots, u_i, v_0,\ldots, v_i)
		$$
		and 
		$$
			(u\cdot v)_i = \mu_i(u_0,\ldots, u_i, v_0,\ldots, v_i).
		$$
		The (injective) map 
		$$
			\tau:\ k \longrightarrow W(k):\ u \mapsto (u,0,0,\ldots)
		$$
		is multiplicative. The unit element of $W(k)$ is $(1,0,0,\ldots)$ and the zero element is $(0,0,0,\ldots)$.
		\item If $k$ is a perfect field of characteristic $p>0$, then $W(k)$ is a complete discrete valuation ring of characteristic zero with uniformiser $p$ and residue field $k$. Furthermore, the element $p=1+\ldots+1$ (the sum being taken in the ring $W(k)$) is equal to 
		$$
		(0,1,0,\ldots )
		$$
		and therefore projecting to the first $n$ components of $W(k)$ induces an isomorphism
		$$
			W(k)/p^nW(k) \stackrel{\sim}{\longrightarrow} W_n(k)
		$$
		where $W_n(k)=k^n$ denotes the ring of truncated Witt vectors of length $n$ (addition and multiplication being defined by the same polynomials $\sigma_i$ and $\mu_i$).
		\end{enumerate}
	\end{defi}
	
	For the remainder of this section let $k$ denote an algebraically closed field of characteristic $p>0$.
	
	\begin{prop}\label{prop poly is zariski}
		 Let $n,l\in \N$. If $f\in W_n(k)[X_1,\ldots, X_l]$ is a polynomial, then the map
		\begin{equation}\label{eqn poly map}
			W_n(k)^l \longrightarrow W_n(k):\ (x_1,\ldots, x_l) \mapsto f(x_1,\ldots, x_l)
		\end{equation}
		is a morphism of varieties.
	\end{prop}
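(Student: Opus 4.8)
The plan is to reduce the statement to the single structural fact recalled in the Definition above: addition and multiplication on $W_n(k)$ are computed coordinatewise by the truncated Witt polynomials $\sigma_i,\mu_i$, which may be taken with coefficients in $\Z$, hence have images in the prime field $\F_p\subseteq k$. Identifying $W_n(k)$ with the affine space $k^n$ (and $W_n(k)^l$ with $k^{nl}$, $W_n(k)^2$ with $k^{2n}$, etc.), this says precisely that
$$
+\colon\ W_n(k)^2\longrightarrow W_n(k)\qquad\text{and}\qquad \cdot\colon\ W_n(k)^2\longrightarrow W_n(k)
$$
are morphisms of varieties. Alongside these I would record two trivial families of morphisms: first, for any fixed $c\in W_n(k)$ the constant map $W_n(k)^l\to W_n(k)$ with value $c$; and second, each coordinate projection $W_n(k)^l\to W_n(k)$, $(x_1,\ldots,x_l)\mapsto x_j$, which is even linear in the $nl$ coordinates.

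Next I would build $f$ up from these pieces. Writing $f=\sum_\alpha c_\alpha X_1^{\alpha_1}\cdots X_l^{\alpha_l}$ as a finite $W_n(k)$-linear combination of monomials, the evaluation map attached to a single monomial, namely $(x_1,\ldots,x_l)\mapsto c_\alpha\cdot x_1^{\alpha_1}\cdots x_l^{\alpha_l}$, is a composite of coordinate projections, of the multiplication morphism applied repeatedly (to form the powers $x_j^{\alpha_j}$ and then their product), and of one further multiplication by the constant $c_\alpha$. Since morphisms of varieties are closed under composition, and since $(g_1,\ldots,g_r)\colon X\to Y_1\times\cdots\times Y_r$ is a morphism whenever each $g_i\colon X\to Y_i$ is, this composite is a morphism. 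Finally $f$ is obtained from its monomials by repeated application of the addition morphism, so the map \eqref{eqn poly map} is a morphism $W_n(k)^l\to W_n(k)$, as claimed.

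I do not expect a genuine obstacle here: the argument is formal once one has the coordinatewise description of the ring operations on $W_n(k)$, the only non-formal input being the integrality (equivalently, $\F_p$-rationality) of the universal polynomials $\sigma_i,\mu_i$, which is part of the construction in \cite[Chapter II \S 6]{SerreLocalFields}. The one point deserving a line of care is that the coefficients $c_\alpha$ lie in $W_n(k)$ rather than in $k$, so ``multiplication by $c_\alpha$'' must itself be expanded via the $\mu_i$ with one set of arguments specialised to the components of $c_\alpha\in k^n$; the result is still a polynomial map in the coordinates with coefficients in $k$, hence still a morphism of varieties.
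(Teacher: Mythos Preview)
Your proposal is correct and follows essentially the same approach as the paper: both arguments observe that constants and coordinate projections are morphisms, that the Witt polynomials $\sigma_i,\mu_i$ make addition and multiplication on $W_n(k)$ morphisms of varieties, and then build up an arbitrary $f$ from these pieces. The paper phrases the induction slightly more economically (closure under $+$ and $\cdot$ on a generating set) whereas you spell out the monomial expansion, but the content is the same.
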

	\begin{proof}
		Let us denote the map in \eqref{eqn poly map} by $\varphi_f$.
		If $f$ is a constant polynomial or equal to one of the $X_i$'s, then $\varphi_f$ is a constant function or a projection onto a direct factor, respectively. In particular, $\varphi_f$ is a morphism of varieties in this case. If $f$ and $g$ are polynomials such that $\varphi_f$ and $\varphi_g$ are morphisms of varieties, then so are $\varphi_{f+g}$ and $\varphi_{f\cdot g}$, as for any $(x_1,\ldots,x_l)\in W_n(k)^l$ the components of the image under these maps are given by
		 $$(\varphi_{f+g}(x_1,\ldots,x_l))_i=\sigma_i(\varphi_f(x_1,\ldots,x_l), \varphi_g(x_1,\ldots,x_l))$$
		 and
		 $$(\varphi_{f\cdot g}(x_1,\ldots,x_l))_i=\mu_i(\varphi_f(x_1,\ldots,x_l), \varphi_g(x_1,\ldots,x_l)).$$		 
		 Since we already showed the assertion for a set of generators of $W_n(k)[X_1,\ldots,X_l]$ we are done.
	\end{proof}
	
	\begin{prop}\label{prop inverse is zariski}
		Let $n\in \N$.
		The unit group $W_n(k)^\times$ is a Zariski-open subset of $W_n(k)$, and the inversion map $W_n(k)^\times \longrightarrow W_n(k)^\times:\ x\mapsto x^{-1}$ is a morphism of varieties.
	\end{prop}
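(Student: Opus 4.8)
The plan is to identify $W_n(k)^\times$ concretely as the principal open subset of the affine variety $W_n(k)=k^n$ on which the first coordinate is nonzero, and to read off rational formulas for the coordinates of $x^{-1}$ as a by-product; both assertions then come out together. As a first, easy observation: because the Witt polynomials satisfy $\sigma_0=X_0+Y_0$ and $\mu_0=X_0Y_0$, the zeroth-component projection $\pi_0\colon W_n(k)\to k$, $x\mapsto x_0$, is a ring homomorphism. Hence any unit $x\in W_n(k)^\times$ has $\pi_0(x)=x_0\in k^\times$, which already gives the inclusion $W_n(k)^\times\subseteq\{x\in W_n(k):x_0\neq0\}$.

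The heart of the matter is a normal form for the multiplication polynomials: over $\F_p$ one should have, for every $i\geq0$,
\[
\mu_i \;=\; X_0^{p^i}\,Y_i \;+\; h_i(X_0,\ldots,X_i,Y_0,\ldots,Y_{i-1})
\]
for suitable polynomials $h_i$ (with $h_0=0$). I would verify this by passing to ghost components: the only part of $w_i(X)w_i(Y)$ in which $Y_i$ occurs is $w_i(X)\cdot p^iY_i$, and $w_i(X)\equiv X_0^{p^i}\pmod p$, whereas the carry corrections $p^j(XY)_j^{p^{i-j}}$ with $j<i$ that enter the expression for $(XY)_i$ involve only $Y_0,\ldots,Y_{i-1}$; dividing through by $p^i$ and reducing modulo $p$ yields the displayed form. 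This ghost-component computation is the one step that is not pure bookkeeping, and I expect it to be the main (though routine) obstacle.

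To conclude, fix $x\in W_n(k)$ with $x_0\neq0$. Any inverse $y=x^{-1}$ must satisfy $\mu_i(x_0,\ldots,x_i,y_0,\ldots,y_i)=\delta_{i,0}$ for $0\leq i<n$, and the normal form shows that these equations have a unique solution, obtained recursively by
\[
y_0=x_0^{-1},\qquad y_i=-\,x_0^{-p^i}\,h_i(x_0,\ldots,x_i,y_0,\ldots,y_{i-1})\quad(1\leq i<n);
\]
conversely this $y$ really is a two-sided inverse (as $W_n(k)$ is commutative), so $x\in W_n(k)^\times$. This proves $W_n(k)^\times=\{x\in W_n(k):x_0\neq0\}$, which is Zariski-open in $k^n$. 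Moreover, an induction on $i$ using the recursion exhibits each $y_i$ as a regular function on $\{x_0\neq0\}$: it is built from the regular functions $x_0,x_1,\ldots,x_i$ and from $x_0^{-1}$ (the inverse of the unit $x_0$, which is a morphism on $k^\times$) by polynomial operations over $\F_p$, and polynomial maps between affine spaces are morphisms — this being the case $n=1$ of Proposition~\ref{prop poly is zariski}. Hence $x\mapsto(y_0,\ldots,y_{n-1})=x^{-1}$ is a morphism of varieties $W_n(k)^\times\to W_n(k)^\times$, as desired.
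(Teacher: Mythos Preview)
Your proof is correct and takes a genuinely different route from the paper's. The paper never analyses the structure of the $\mu_i$; instead it writes down the closed-form inverse
\[
x^{-1}=\tau(x_0^{-1})\cdot\sum_{i=0}^{n-1}\bigl(1+(-1)\cdot\tau(x_0^{-1})\cdot x\bigr)^i,
\]
which is just the finite geometric series for the inverse of $\tau(x_0^{-1})x$, an element whose zeroth component is $1$ and which therefore differs from $1$ by a nilpotent of index at most $n$ in $W_n(k)$. Proposition~\ref{prop poly is zariski} then shows that this expression, being built from Witt additions and multiplications together with the single scalar inversion $x_0\mapsto x_0^{-1}$, is a morphism. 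Your argument instead extracts the leading term $X_0^{p^i}Y_i$ of $\mu_i$ modulo $p$ via ghost components and solves the system $\mu_i(x,y)=\delta_{i,0}$ by a triangular recursion. The paper's approach is shorter and uses only the nilpotency of the augmentation ideal; yours is more explicit, yields concrete formulas $y_i\in k[x_0,\ldots,x_i,x_0^{-1}]$, and does not need to identify $\ker\pi_0$ with $pW_n(k)$. One small remark: the closing reference to ``the case $n=1$ of Proposition~\ref{prop poly is zariski}'' is superfluous, since once each $y_i$ is exhibited as an element of $k[x_0,\ldots,x_{n-1},x_0^{-1}]$ its regularity on the principal open set $\{x_0\neq 0\}$ is immediate.
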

	\begin{proof}
		$W_n(k)^\times = \{ x \in W_n(k)\ \mid\  x_0\neq 0 \}$ is clearly Zariski-open, with coordinate ring 
		$k[X_0,\ldots,X_{n-1},X_0^{-1}]$. The inversion map is given by
		\begin{equation}
			x \mapsto \tau(x_0^{-1})\cdot \sum_{i=0}^{n-1}(1+(-1)\cdot \tau(x_0^{-1})\cdot x)^i
		\end{equation}
		Multiplication (and therefore exponentiation) as well as addition are clearly given by polynomial functions in the components of the arguments. Therefore the components of the right hand side are given by polynomials in $X_0,\ldots,X_{n-1}$ and $X_0^{-1}$, which shows that the assignment is a morphism of varieties.
	\end{proof}
	
	\begin{prop}\label{prop gl0 alg grp}
		Let $n,r\in \N$. The group $\GL_r(W_n(k))$ is a Zariski-open subset of $W_n(k)^{r\times r}$, and multiplication and inversion turn $\GL_r(W_n(k))$ into an algebraic group over $k$. Every subset $X \subseteq \GL_r(W_n(k))$ given as the vanishing set of polynomials in $W_n(k)[T_{i,j}, \det((T_{i,j})_{i,j})^{-1} \ |\ i,j=1,\ldots,r]$ is Zariski-closed. 
	\end{prop}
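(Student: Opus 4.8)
The plan is to deduce all three assertions from Propositions~\ref{prop poly is zariski} and~\ref{prop inverse is zariski}. First I would identify $W_n(k)^{r\times r}$ with the affine variety $W_n(k)^{r^2}\cong\mathbb{A}^{nr^2}_k$, whose coordinate ring is the polynomial ring in the Witt components $X_{i,j}^{(m)}$ ($1\le i,j\le r$, $0\le m\le n-1$) of the matrix entries. The Leibniz formula exhibits the determinant as a polynomial in the $r^2$ entries with coefficients in $W_n(k)$ (indeed with coefficients $\pm 1$), so by Proposition~\ref{prop poly is zariski} the map $\det\colon W_n(k)^{r\times r}\to W_n(k)$ is a morphism of varieties. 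Since over any commutative ring a square matrix is invertible precisely when its determinant is a unit, we have $\GL_r(W_n(k))=\det^{-1}\!\big(W_n(k)^\times\big)$, which is the preimage of the open set $W_n(k)^\times$ (Proposition~\ref{prop inverse is zariski}) under a morphism and therefore Zariski-open. Explicitly it is the principal open subset where $\det\big((X_{i,j}^{(0)})_{i,j}\big)\neq 0$, with coordinate ring the corresponding localisation.

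Next I would treat the group structure. The entries of a product $AB$ are bilinear polynomials with integer coefficients in the entries of $A$ and $B$, so Proposition~\ref{prop poly is zariski} shows $(A,B)\mapsto AB$ is a morphism $W_n(k)^{r\times r}\times W_n(k)^{r\times r}\to W_n(k)^{r\times r}$; as $\det(AB)=\det(A)\det(B)$ is a unit, its image lies in the open subvariety $\GL_r(W_n(k))$, so it corestricts to a morphism $\GL_r(W_n(k))\times\GL_r(W_n(k))\to\GL_r(W_n(k))$. For inversion, Cramer's rule gives $A^{-1}=\det(A)^{-1}\cdot\operatorname{adj}(A)$, where each entry of the adjugate is an integer polynomial in the entries of $A$ (a morphism by Proposition~\ref{prop poly is zariski}), and $A\mapsto\det(A)^{-1}$ is the composite of $\det$, which maps $\GL_r(W_n(k))$ into $W_n(k)^\times$, with the inversion morphism of Proposition~\ref{prop inverse is zariski}. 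Multiplying these (multiplication $W_n(k)\times W_n(k)\to W_n(k)$ being a morphism by Proposition~\ref{prop poly is zariski}) shows each entry of $A^{-1}$ is a morphism $\GL_r(W_n(k))\to W_n(k)$, so $A\mapsto A^{-1}$ is a morphism $\GL_r(W_n(k))\to\GL_r(W_n(k))$. Hence $\GL_r(W_n(k))$ is an algebraic group over $k$.

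Finally, for the statement about vanishing sets, note that a polynomial $g\in W_n(k)[T_{i,j},\det((T_{i,j})_{i,j})^{-1}]$ defines a map $\GL_r(W_n(k))\to W_n(k)$, $A\mapsto g(A)$, obtained by substituting $A_{i,j}$ for $T_{i,j}$ and $\det(A)^{-1}$ for $\det((T_{i,j})_{i,j})^{-1}$. By the previous paragraph this map is assembled from constants in $W_n(k)$, the coordinate projections $A\mapsto A_{i,j}$, the morphism $A\mapsto\det(A)^{-1}$, and the ring operations of $W_n(k)$, all of which are morphisms of varieties; hence $A\mapsto g(A)$ is a morphism $\GL_r(W_n(k))\to W_n(k)$, i.e.\ each Witt component $A\mapsto g(A)_m$ ($0\le m\le n-1$) is a regular function on $\GL_r(W_n(k))$. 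The vanishing set of $g$ is then the common zero locus of the regular functions $g(-)_0,\dots,g(-)_{n-1}$, hence Zariski-closed, and the vanishing set of an arbitrary family of such polynomials is an intersection of closed sets, hence closed.

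I do not expect a serious obstacle: the argument is bookkeeping on top of Propositions~\ref{prop poly is zariski} and~\ref{prop inverse is zariski}, together with the standard facts that a morphism into affine space landing in an open subvariety corestricts to a morphism, that restricting a morphism to an open subvariety of the source is again a morphism, and that $\det$, matrix multiplication and the adjugate are integer-polynomial in the matrix entries. The one point requiring a little care is the treatment of the symbol $\det((T_{i,j})_{i,j})^{-1}$ in the last paragraph: one must observe that $\det$ restricted to $\GL_r(W_n(k))$ factors through $W_n(k)^\times$ before applying Proposition~\ref{prop inverse is zariski}, and that evaluating such a ``Laurent-type'' polynomial on $\GL_r(W_n(k))$ again yields a morphism into $W_n(k)$.
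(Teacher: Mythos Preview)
Your proof is correct and follows essentially the same approach as the paper: both deduce openness from $\GL_r(W_n(k))=\det^{-1}(W_n(k)^\times)$ via Propositions~\ref{prop poly is zariski} and~\ref{prop inverse is zariski}, and then verify the group operations and the closedness claim entry by entry. You are simply more explicit about the inversion step (spelling out Cramer's rule and the adjugate) and about why the Witt components of an evaluated polynomial are regular functions, whereas the paper leaves these as ``easily seen'' from the two propositions.
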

	\begin{proof}
		By Proposition~\ref{prop poly is zariski} the polynomial $\det= \det((T_{i,j})_{i,j})$ defines a morphism of varieties $W_n(k)^{r\times r}\longrightarrow W_n(k)$. By Proposition~\ref{prop inverse is zariski} the subset $W_n(k)^\times$ of $W_n(k)$ is Zariski-open. Hence so is its preimage in $W_n(k)^{r\times r}$, which is $\GL_r(W_n(k))$. It also follows from Proposition~\ref{prop inverse is zariski} that $\det^{-1}$ defines a morphism of varieties $\GL_r(W_n(k))\longrightarrow W_n(k)^\times$. By looking at multiplication and inversion maps entry by entry they are now easily seen to be morphisms (using Proposition~\ref{prop poly is zariski}). The same is true for the claim on subsets defined by polynomials.
	\end{proof}
	
	\begin{prop}\label{prop aut alg}
		Let $n\in\N$ be arbitrary and
		let $\Lambda$ be a $W_n(k)$-algebra which is free and finitely generated as a $W_n(k)$-module. 
		\begin{enumerate}
		\item $
			\Aut_{W_n(k)}(\Lambda)
		$ 
		is an algebraic group over $k$.
		\item $\Inn(\Lambda)$ is a Zariski-closed subgroup of $\Aut_{W_n(k)}(\Lambda)$. In particular, the quotient $$\Out_{W_n(k)}(\Lambda)=\Aut_{W_n(k)}(\Lambda)/\Inn(\Lambda)$$ is an algebraic group. 
		\end{enumerate}
	\end{prop}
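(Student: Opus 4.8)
The plan is to realise $\Aut_{W_n(k)}(\Lambda)$ explicitly as a Zariski-closed subgroup of $\GL_r(W_n(k))$, where $r=\rank_{W_n(k)}\Lambda$, and then argue that $\Inn(\Lambda)$ is the image of a morphism of algebraic groups (hence its image is closed). Fix a $W_n(k)$-basis $e_1,\ldots,e_r$ of $\Lambda$, so that multiplication is encoded by structure constants $c_{i,j,l}\in W_n(k)$ via $e_i e_j=\sum_l c_{i,j,l}e_l$. A $W_n(k)$-module endomorphism of $\Lambda$ is given by a matrix $T=(T_{i,j})\in W_n(k)^{r\times r}$, and $T$ is an automorphism of the $W_n(k)$-algebra $\Lambda$ precisely when $T\in\GL_r(W_n(k))$ and $T$ is multiplicative, i.e.\ $T(e_i)T(e_j)=T(e_ie_j)$ for all $i,j$ (one should also impose $T(1)=1$, or note that multiplicativity forces $T(1)$ to be a central idempotent and handle this, depending on whether $\Lambda$ has a unit — since $\Lambda$ is an algebra we assume it is unital). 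Expanding $T(e_i)T(e_j)=T(e_ie_j)$ in the basis and comparing coordinates turns each of these conditions into the vanishing of finitely many polynomials with coefficients in $W_n(k)$ in the entries $T_{i,j}$ (the products of entries that arise are polynomial by Proposition~\ref{prop poly is zariski}, and the structure constants enter as constant coefficients). Hence, by the last sentence of Proposition~\ref{prop gl0 alg grp}, $\Aut_{W_n(k)}(\Lambda)$ is Zariski-closed in $\GL_r(W_n(k))$, and it is a subgroup, so it is an algebraic group over $k$; this proves part (1).

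For part (2), consider the map $\Lambda^\times\longrightarrow\Aut_{W_n(k)}(\Lambda)$ sending a unit $u$ to the inner automorphism $\mathrm{conj}_u:x\mapsto uxu^{-1}$. First, $\Lambda^\times$ is an algebraic group: it is the preimage in $W_n(k)^{r\times r}$ of $\GL_r(W_n(k))$ under the regular representation $u\mapsto(\text{matrix of left multiplication by }u)$, which is a morphism of varieties by Proposition~\ref{prop poly is zariski}, so $\Lambda^\times$ is Zariski-open in the affine space $\Lambda$, and multiplication is polynomial while inversion is a morphism by the same argument used in Proposition~\ref{prop inverse is zariski} (Cayley--Hamilton / geometric-series expression, applied to the regular representation). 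The matrix entries of $\mathrm{conj}_u$ with respect to the basis $e_1,\ldots,e_r$ are obtained by multiplying $u$, then $e_i$, then $u^{-1}$ and reading off coordinates; since multiplication in $\Lambda$ is polynomial in coordinates and $u\mapsto u^{-1}$ is a morphism on $\Lambda^\times$, each entry is a regular function on $\Lambda^\times$. Thus $u\mapsto\mathrm{conj}_u$ is a morphism of algebraic groups $\Lambda^\times\to\Aut_{W_n(k)}(\Lambda)$, and its image is exactly $\Inn(\Lambda)$. The image of a homomorphism of algebraic groups is a closed subgroup, so $\Inn(\Lambda)$ is Zariski-closed in $\Aut_{W_n(k)}(\Lambda)$, and the quotient $\Out_{W_n(k)}(\Lambda)=\Aut_{W_n(k)}(\Lambda)/\Inn(\Lambda)$ is again an algebraic group over $k$.

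The one genuinely delicate point — the main obstacle — is the unital/idempotent bookkeeping and, more substantively, making sure all the ``polynomial in the components over $k$'' claims are legitimate over the ring $W_n(k)$ rather than over a field. This is precisely what Propositions~\ref{prop poly is zariski}, \ref{prop inverse is zariski} and \ref{prop gl0 alg grp} are set up to handle: every operation used — the regular representation, the determinant, forming $u^{-1}$ via a finite geometric series, and comparing structure-constant equations coordinatewise — reduces to the statement that a polynomial map $W_n(k)^l\to W_n(k)$ is a morphism of varieties and that the relevant unit loci are open with the inversion being regular. So the proof is essentially an assembly of those three propositions, with the only care needed being to phrase the defining equations of $\Aut_{W_n(k)}(\Lambda)$ inside $\GL_r(W_n(k))$ (so that the final clause of Proposition~\ref{prop gl0 alg grp} applies verbatim) and to verify that $\mathrm{conj}$ is a morphism so that its image is closed.
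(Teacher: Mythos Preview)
Your proposal is correct and follows essentially the same approach as the paper: both realise $\Aut_{W_n(k)}(\Lambda)$ as the vanishing locus of the structure-constant equations inside $\GL_r(W_n(k))$ and invoke Proposition~\ref{prop gl0 alg grp}, and both obtain $\Inn(\Lambda)$ as the image of the morphism $\UU(\Lambda)\to\Aut_{W_n(k)}(\Lambda)$ given by conjugation (the paper factors this morphism as the product of the left regular representation with the inverse of the right regular representation, which is exactly your $u\mapsto\mathrm{conj}_u$ written out). Your aside about $T(1)=1$ is harmless but unnecessary, since a bijective multiplicative map on a unital ring automatically preserves the identity.
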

	\begin{proof}
		Let $r=\rank_{W_n(k)}(\Lambda)$ and choose a $W_n(k)$-basis 
		$\lambda_1,\ldots,\lambda_r$ of $\Lambda$. There are structure constants $c_{i,j;v}\in W_n(k)$ for $i,j,v\in \{1,\ldots,r\}$ such that $\lambda_i\cdot \lambda_j= \sum_{v=1}^rc_{i,j;v}\cdot \lambda_v$.
		\begin{enumerate}
		\item The group $\Aut_{W_n(k)}(\Lambda)$ is equal to
		$$\resizebox{0.85\hsize}{!}{$
			\left\{\ (m_{i,j})_{i,j} \in \GL_r(W_n(k))
			\ \Bigg| \ \sum_{s,t=1}^r m_{i,s}\cdot m_{j,t}\cdot c_{s,t;v}= \sum_{w=1}^r c_{i,j;w} \cdot m_{w,v}
			 \textrm{ for } i,j,v=1,\ldots,r\ \right\}
		$}$$
		which is an algebraic group by Proposition~\ref{prop gl0 alg grp}.
		\item The action of $W_n(k)^{r\times r}$ on $\Lambda$ with respect to the basis $\lambda_1,\ldots,\lambda_r$ gives us two different embeddings $\Lambda \longrightarrow W_n(k)^{r\times r}$ corresponding to right and  left multiplication, respectively. Note that $\UU (\Lambda)$ is simply the preimage of $\GL_r(W_n(k))$ under either of those embeddings. Hence $\UU(\Lambda)$ is a Zariski-open subset of $\Lambda$, and we get two morphisms of varieties
		$\UU(\Lambda)\longrightarrow \GL_r(W_n(k))$. One of those is already a homomorphism of groups, the other one becomes one after being composed with the inversion map on $\GL_r(W_n(k))$. Now we just consider the diagonal embedding of $\UU(\Lambda)$ into $\UU(\Lambda)\times \UU(\Lambda)$ followed by the direct product of the two group homomorphisms into $\GL_r(W_n(k))$, followed by multiplication $\GL_r(W_n(k))\times \GL_r(W_n(k))\longrightarrow \GL_r(W_n(k))$. All of the maps involved in this construction are morphisms of varieties, and the resulting map is a group homomorphism $\UU(\Lambda)\longrightarrow \Aut_{W(k)}(\Lambda)$ whose image is $\Inn(\Lambda)$. This proves the claim, as the image of a homomorphism of affine algebraic groups is automatically closed.  \qedhere
		\end{enumerate}
	\end{proof}

	\begin{prop}\label{prop ext}
		Let $\OO$ be a commutative $W(k)$-order, and let $\Lambda$ be an $\OO$-order. Then $\GL_\OO(\Lambda/p^n\Lambda)\subseteq \GL_{W(k)}(\Lambda/p^n\Lambda)$ is an algebraic subgroup for any $n\in \N$. In particular
		$$\Aut_\OO(\Lambda/p^n \Lambda)=\Aut_{W(k)}(\Lambda/p^n\Lambda)\cap \GL_\OO(\Lambda/p^n\Lambda)$$ is an algebraic group, and so is 
		$$\Out_\OO(\Lambda/p^n \Lambda)=\Aut_\OO(\Lambda/p^n \Lambda)/\Inn(\Lambda/p^n\Lambda)$$  
	\end{prop}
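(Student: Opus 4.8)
The plan is to realise everything inside $\GL_r(W_n(k))$, where $r=\rank_{W(k)}(\Lambda)$, and to cut out the relevant subgroups by Zariski-closed conditions. First I would note that since $W(k)$ is a (complete) discrete valuation ring and $\OO$ is a $W(k)$-order, $\OO$ is free of finite rank as a $W(k)$-module; since $\Lambda$ is free of finite rank over $\OO$ it is free of finite rank $r$ over $W(k)$. Hence $\Lambda/p^n\Lambda$ is free of rank $r$ over $W(k)/p^nW(k)=W_n(k)$, and, because $p^n$ annihilates $\Lambda/p^n\Lambda$, a $W(k)$-linear (resp.\ $W(k)$-algebra) endomorphism of $\Lambda/p^n\Lambda$ is the same thing as a $W_n(k)$-linear (resp.\ $W_n(k)$-algebra) endomorphism. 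Fixing a $W_n(k)$-basis identifies $\End_{W(k)}(\Lambda/p^n\Lambda)$ with $W_n(k)^{r\times r}$ and $\GL_{W(k)}(\Lambda/p^n\Lambda)$ with $\GL_r(W_n(k))$, which is an algebraic group over $k$ by Proposition~\ref{prop gl0 alg grp}, and under this identification $\Aut_{W(k)}(\Lambda/p^n\Lambda)=\Aut_{W_n(k)}(\Lambda/p^n\Lambda)$ is an algebraic group by Proposition~\ref{prop aut alg}(1).

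For the first assertion, choose a $W(k)$-basis $\omega_1=1,\omega_2,\ldots,\omega_d$ of $\OO$, and let $R_i\in W_n(k)^{r\times r}$ be the matrix of the endomorphism of $\Lambda/p^n\Lambda$ given by multiplication by (the image of) $\omega_i$. A $W(k)$-linear automorphism $g$ of $\Lambda/p^n\Lambda$ is $\OO$-linear if and only if it commutes with multiplication by every element of $\OO$; since the $\omega_i$ span $\OO$ over $W(k)$ and $g$ is already $W(k)$-linear, this holds if and only if $gR_i=R_ig$ for $i=1,\ldots,d$. Each of these matrix equations is a system of polynomial (indeed linear) equations in the entries $T_{i,j}$ of $g$ with coefficients in $W_n(k)$, so by the last statement of Proposition~\ref{prop gl0 alg grp} their common solution set inside $\GL_r(W_n(k))$ is Zariski-closed; it is visibly a subgroup, so $\GL_\OO(\Lambda/p^n\Lambda)$ is an algebraic subgroup of $\GL_{W(k)}(\Lambda/p^n\Lambda)$.

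Now I would deduce the ``in particular'' statements. A $W(k)$-algebra automorphism of $\Lambda/p^n\Lambda$ is an $\OO$-algebra automorphism precisely when it is in addition $\OO$-linear (an $\OO$-linear ring automorphism $\phi$ automatically satisfies $\phi(\omega\cdot 1)=\omega\cdot\phi(1)=\omega\cdot 1$, using that $\OO\cdot 1$ is central in $\Lambda$), so $\Aut_\OO(\Lambda/p^n\Lambda)=\Aut_{W(k)}(\Lambda/p^n\Lambda)\cap\GL_\OO(\Lambda/p^n\Lambda)$; being the intersection of two Zariski-closed subgroups of $\GL_r(W_n(k))$ it is a Zariski-closed subgroup, hence an algebraic group. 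Finally, every inner automorphism fixes the central subring $\OO\cdot 1$ pointwise, so $\Inn(\Lambda/p^n\Lambda)\subseteq\Aut_\OO(\Lambda/p^n\Lambda)$; by Proposition~\ref{prop aut alg}(2) it is Zariski-closed in $\Aut_{W(k)}(\Lambda/p^n\Lambda)$ and hence in $\Aut_\OO(\Lambda/p^n\Lambda)$, and it is normal. Therefore $\Out_\OO(\Lambda/p^n\Lambda)=\Aut_\OO(\Lambda/p^n\Lambda)/\Inn(\Lambda/p^n\Lambda)$ is an algebraic group, being the quotient of an affine algebraic group by a closed normal subgroup.

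I do not anticipate a serious obstacle: the only non-formal inputs are the earlier propositions together with the standard fact that a quotient of an affine algebraic group by a closed normal subgroup is again an affine algebraic group. The one point that deserves care is the reduction from ``$\OO$-linear'' to finitely many commutation relations $gR_i=R_ig$ --- this is exactly where finite generation of $\OO$ over $W(k)$ (guaranteed by the order hypothesis) is used, and it is what makes $\OO$-linearity a Zariski-closed rather than merely an intersection-of-infinitely-many-closed condition.
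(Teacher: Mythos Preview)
Your proof is correct and follows essentially the same approach as the paper: cut out $\GL_\OO(\Lambda/p^n\Lambda)$ inside $\GL_{r}(W_n(k))$ by the commutation relations with the matrices of multiplication by a $W(k)$-basis of $\OO$, invoke Proposition~\ref{prop gl0 alg grp}, then take intersections and quotients. (One harmless quibble: your closing remark that finiteness of the basis is needed to avoid ``an intersection-of-infinitely-many-closed conditions'' is off --- arbitrary intersections of Zariski-closed sets are still Zariski-closed --- but this does not affect the argument.)
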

	\begin{proof}
		Set $l=\rank_{W(k)}(\OO)$ and $r=\rank_\OO(\Lambda)$. The ring
		$\OO$ acts $W(k)$-linearly on $\Lambda$ by multiplication, which induces a homomorphism $\OO\longrightarrow \End_{W(k)}(\Lambda/p^n\Lambda)\cong W_n(k)^{r\cdot l\times r\cdot l}$. Let $c_1,\ldots,c_l\in W_n(k)^{r\cdot l\times r\cdot l}$ denote the images of a $W(k)$-basis of $\OO$ under this homomorphism. Then 
		$$\resizebox{0.95\hsize}{!}{$
			\GL_\OO(\Lambda/p^n\Lambda) = \left\{ (m_{i,j})_{i,j} \in \GL_{r\cdot l}(W_n(k)) \ \Bigg| \ \sum_{t=1}^{r\cdot l} (c_a)_{s,t} \cdot m_{t,v} = \sum_{t=1}^{r\cdot l} m_{s,t}\cdot (c_a)_{t,v} \textrm{ for all $1\leq s,v\leq r\cdot l$ and $1\leq a \leq l$} \right\}
		$}$$
		which is an algebraic group by Proposition~\ref{prop gl0 alg grp}. The other assertions follow immediately.
	\end{proof}

	\begin{corollary}\label{corollary out mod pn alg}
		Let $\OO$ be a commutative $W(k)$-order, and let $\Lambda$ be an $\OO$-order.  For any $n\in \N$
		the group $\Out_{\OO}(\Lambda/p^n\Lambda)$ is an affine algebraic group (as we saw), and for any $m \geq n$ the canonical  map $$\Out_{\OO}(\Lambda/p^m\Lambda)\longrightarrow \Out_{\OO}(\Lambda/p^n\Lambda)$$ is a homomorphism of algebraic groups.
	\end{corollary}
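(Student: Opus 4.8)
The plan is to trace the constructions of Propositions~\ref{prop aut alg} and \ref{prop ext} through and observe that the canonical map in question is, on the level of matrices, induced by the truncation of Witt vectors $W_m(k)\to W_n(k)$, $(x_0,\dots,x_{m-1})\mapsto(x_0,\dots,x_{n-1})$, which is simultaneously a ring homomorphism and (being a coordinate projection $k^m\to k^n$) a morphism of varieties. Everything then reduces to propagating this single fact up through the layers $\GL_N \rightsquigarrow \Aut \rightsquigarrow \Out$.

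First I would fix once and for all a $W(k)$-basis $\lambda_1,\dots,\lambda_N$ of $\Lambda$; this exists because $\Lambda$ is a finitely generated torsion-free module over the principal ideal domain $W(k)$, with $N=\rank_{W(k)}(\Lambda)$. For every $n$, the images of $\lambda_1,\dots,\lambda_N$ form a $W_n(k)$-basis of $\Lambda/p^n\Lambda$, and using these \emph{compatible} bases we realize $\Aut_{W(k)}(\Lambda/p^n\Lambda)$, $\Aut_\OO(\Lambda/p^n\Lambda)$ and $\Inn(\Lambda/p^n\Lambda)$ as (closed subgroups of) $\GL_N(W_n(k))$ exactly as in those propositions. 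With these choices, the reduction map $\Lambda/p^m\Lambda\to\Lambda/p^n\Lambda$ becomes the entry-wise truncation $\GL_N(W_m(k))\to\GL_N(W_n(k))$. This is well defined (truncation commutes with $\det$ and preserves the $0$th component, hence sends invertible matrices to invertible matrices), it is visibly a group homomorphism, and by Proposition~\ref{prop poly is zariski} it is a morphism of varieties; so it is a homomorphism of algebraic groups.

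Next I would check that this homomorphism restricts correctly. With the compatible bases above, the structure constants of $\Lambda/p^n\Lambda$, and the matrices encoding the $\OO$-action appearing in the proof of Proposition~\ref{prop ext}, are the truncations of the corresponding data for $\Lambda/p^m\Lambda$; since truncation is a ring homomorphism it carries any matrix satisfying the defining polynomial equations of $\Aut_{W(k)}(\Lambda/p^m\Lambda)$, resp.\ of $\Aut_\OO(\Lambda/p^m\Lambda)$, to one satisfying the defining equations of $\Aut_{W(k)}(\Lambda/p^n\Lambda)$, resp.\ of $\Aut_\OO(\Lambda/p^n\Lambda)$. Moreover the canonical surjection $\Lambda/p^m\Lambda\to\Lambda/p^n\Lambda$ is a ring homomorphism, so it sends $\UU(\Lambda/p^m\Lambda)$ into $\UU(\Lambda/p^n\Lambda)$ and intertwines conjugation; hence it carries $\Inn(\Lambda/p^m\Lambda)$ into $\Inn(\Lambda/p^n\Lambda)$. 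Being restrictions of a morphism of varieties, all of these maps are again morphisms of varieties, so homomorphisms of algebraic groups.

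Finally I would pass to the quotients. Composing the homomorphism of algebraic groups $\Aut_\OO(\Lambda/p^m\Lambda)\to\Aut_\OO(\Lambda/p^n\Lambda)$ with the quotient map onto $\Out_\OO(\Lambda/p^n\Lambda)$ gives a homomorphism of algebraic groups $\Aut_\OO(\Lambda/p^m\Lambda)\to\Out_\OO(\Lambda/p^n\Lambda)$ that is trivial on the closed normal subgroup $\Inn(\Lambda/p^m\Lambda)$; by the universal property of the quotient of an affine algebraic group by a closed normal subgroup, it factors uniquely as a homomorphism of algebraic groups $\Out_\OO(\Lambda/p^m\Lambda)\to\Out_\OO(\Lambda/p^n\Lambda)$, and this is the canonical map. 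I do not anticipate a real obstacle here: the two points that deserve care are using the reductions of a single fixed $W(k)$-basis, so that the algebraic group structures for varying $n$ are genuinely compatible, and the appeal to functoriality of quotients of algebraic groups — both standard.
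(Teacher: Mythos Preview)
Your proof is correct and follows exactly the route sketched in the paper's own (one-sentence) proof: take the entry-wise truncation $\GL_N(W_m(k))\to\GL_N(W_n(k))$, restrict to $\Aut_\OO$, and pass to the quotient by inner automorphisms. You have simply supplied the details the paper omits, in particular the point that one should use the reductions of a single fixed $W(k)$-basis of $\Lambda$ for all $n$ so that the algebraic group structures for varying $n$ are genuinely compatible, and the invocation of the universal property of quotients.
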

	\begin{proof}
		This follows immediately from Propositions \ref{prop aut alg} and \ref{prop ext} (take the morphism $\GL_r(W_m(k))\longrightarrow \GL_r(W_n(k))$ given by truncating the entries, restrict to $\Aut_{\OO}(\Lambda/p^m\Lambda)$ and then pass to the quotient by inner automorphisms).
	\end{proof}

	\section{$\Out_\OO(\Lambda)$ as a subgroup of $\Out_\OO(\Lambda/p^n\Lambda)$}
	
	In this section we let $(K,\OO,k)$ denote an arbitrary $p$-modular system. We assume that $\OO$ is complete, and by $\pi$ we denote a generator of the unique maximal ideal of $\OO$.
	Note that $\OO$ is a commutative $W(k)$-order.
	
	\begin{defi}
		Let $\Lambda$ be an $\OO$-order in a separable $K$-algebra. Let $I(\Lambda) \unlhd \OO$ be the ideal consisting of those elements which annihilate $H^1(\Lambda, T)$ for all two-sided $\Lambda$-modules $T$. By \cite{HigmanIsoOrders} the ideal $I(\Lambda)$ is non-zero, and therefore equal to $\pi^{d(\Lambda)} \OO$ for some $d(\Lambda)\in \N$. We call $d(\Lambda)$ the \emph{depth of $\Lambda$}.
	\end{defi}
	
	\begin{remark}
	The cohomology groups $H^i(\Lambda, -)$ above are taken in the sense of Hochschild cohomology (see \cite[Chapter 9]{Weibel} or \cite{CartanEilenberg}). 
	\begin{enumerate}
	\item $\pi^{d(\Lambda)} \cdot H^1(\Lambda, T)=0$ for all $\Lambda$-$\Lambda$-bimodules $T$ implies that $\pi^{d(\Lambda)} \cdot H^i(\Lambda, T)=0$ for all $i\geq 1$ and all $\Lambda$-$\Lambda$-bimodules $T$. (see \cite[Introduction]{HigmanIsoOrders})
	\item 
	If $M$ and $N$ are $\Lambda$-lattices then 
	$H^1(\Lambda, \Hom_\OO(M,N)) \cong \Ext_{\Lambda}^1(M,N)$.
	(see \cite[Lemma 9.1.9]{Weibel})
	\item The depth of $\OO G$ is equal to the $\pi$-valuation of $|G|$.
	(see \cite[Introduction]{HigmanIsoOrders})
	\end{enumerate}
	\end{remark}
	
	\begin{lemma}\label{lemma maranda}
		The canonical map 
		$$
			\Out_\OO(\Lambda)\longrightarrow \Out_\OO(\Lambda/\pi^{s}\Lambda)
		$$
		is injective for any $s \geq d(\Lambda\otimes_{\OO}\Lambda^{\opp})+1$.
	\end{lemma}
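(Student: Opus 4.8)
The plan is to translate the injectivity statement into a question about lattices over $\Gamma := \Lambda\otimes_{\OO}\Lambda^{\opp}$, and then to run the classical lifting argument of Maranda and Higman, keeping careful track of how the bound $s\geq d(\Lambda\otimes_{\OO}\Lambda^{\opp})+1$ enters. The point is that ``$\alpha$ is inner'' is an isomorphism statement between two $\Gamma$-lattices, and such isomorphisms can be detected modulo a sufficiently high power of $\pi$.

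First I would reduce to an automorphism that is the identity modulo $\pi^{s}$. Suppose $[\alpha]\in\Out_{\OO}(\Lambda)$ is sent to the trivial class, so the image $\bar\alpha$ of $\alpha$ in $\Aut_{\OO}(\Lambda/\pi^{s}\Lambda)$ is conjugation by a unit $\bar v\in(\Lambda/\pi^{s}\Lambda)^{\times}$. Since $\OO$ is complete and $\pi\Lambda\subseteq\rad(\Lambda)$, any preimage $v\in\Lambda$ of $\bar v$ is automatically a unit of $\Lambda$ (it is a unit modulo $\rad(\Lambda)$), so after composing $\alpha$ with a suitable inner automorphism we may assume $\alpha(x)\equiv x\pmod{\pi^{s}\Lambda}$ for all $x\in\Lambda$, without changing the class $[\alpha]$. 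It then suffices to show that such an $\alpha$ is inner.

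Next I would phrase this via bimodules. Let $M={}_{1}\Lambda_{\alpha}$ be $\Lambda$ with its usual left $\Lambda$-action and with right $\Lambda$-action twisted through $\alpha$, and let $N={}_{1}\Lambda_{1}=\Lambda$; both are $\Gamma$-lattices. Every $\Gamma$-homomorphism $N\to M$ is right multiplication by the element $u$ it sends $1$ to, and such a map is a $\Gamma$-homomorphism exactly when $xu=u\alpha(x)$ for all $x$, and an isomorphism exactly when $u$ is a unit; hence $\alpha$ is inner if and only if $M\cong N$ as $\Gamma$-modules. On the other hand, because $\alpha$ is the identity modulo $\pi^{s}\Lambda$, the identity map of $\Lambda$ descends to an isomorphism $M/\pi^{s}M\cong N/\pi^{s}N$ of modules over $\Gamma/\pi^{s}\Gamma\cong(\Lambda/\pi^{s}\Lambda)\otimes_{\OO/\pi^{s}\OO}(\Lambda/\pi^{s}\Lambda)^{\opp}$. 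So the whole lemma reduces to: two $\Gamma$-lattices that become isomorphic modulo $\pi^{s}$ are isomorphic, provided $s\geq d(\Gamma)+1$.

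Finally I would prove this last assertion directly. Note $\Gamma$ is an $\OO$-order in $(K\Lambda)\otimes_{K}(K\Lambda)^{\opp}$, which is separable because $K\Lambda$ is, so $d(\Gamma)$ is defined; and by the Remark following the definition of depth, $\pi^{d(\Gamma)}$ annihilates $\Ext^{1}_{\Gamma}(M,N)=H^{1}(\Gamma,\Hom_{\OO}(M,N))$. From the long exact sequence attached to $0\to N\stackrel{\pi^{s}}{\longrightarrow}N\to N/\pi^{s}N\to 0$, the cokernel of $\Hom_{\Gamma}(M,N)\to\Hom_{\Gamma}(M,N/\pi^{s}N)$ embeds into $\Ext^{1}_{\Gamma}(M,N)$, which is killed by $\pi^{d(\Gamma)}$ and hence by $\pi^{s-1}$; therefore $\pi^{s-1}$ times the given isomorphism $M/\pi^{s}M\stackrel{\sim}{\longrightarrow}N/\pi^{s}N$ lifts to a $\Gamma$-homomorphism $\psi\colon M\to N$. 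Then $\psi(M)\subseteq\pi^{s-1}N$ and $\psi(M)+\pi^{s}N=\pi^{s-1}N$, so $\psi(M)=\pi^{s-1}N$ by Nakayama's lemma; composing with the isomorphism $\pi^{s-1}N\cong N$ we obtain a surjection $M\twoheadrightarrow N$, which is an isomorphism since $M$ and $N$ have the same $\OO$-rank. Thus $M\cong N$, $\alpha$ is inner, and the map is injective. The genuinely delicate step — the one I expect to need the most care — is exactly this lifting: the isomorphism modulo $\pi^{s}$ need not itself lift to a $\Gamma$-homomorphism $M\to N$, only $\pi^{s-1}$ times it does, and one has to observe that such a multiple still has large enough image modulo $\pi^{s}$ to survive the Nakayama argument; the hypothesis $s\geq d(\Gamma)+1$ is precisely what guarantees this. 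The remaining ingredients — the dictionary between inner automorphisms and $\Gamma$-module isomorphisms, the identification of $\Gamma/\pi^{s}\Gamma$, and the rank bookkeeping — are routine.
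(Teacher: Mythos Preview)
Your proof is correct and follows essentially the same route as the paper: translate ``$[\alpha]$ maps to the identity in $\Out_\OO(\Lambda/\pi^s\Lambda)$'' into an isomorphism question for $\Lambda\otimes_\OO\Lambda^{\opp}$-lattices, and then invoke Maranda's theorem. The only difference is cosmetic: the paper simply cites Maranda's theorem \cite[Theorem (30.14)]{CurtisReinerI} at that point, whereas you reprove it in place via the long exact sequence and the Nakayama argument (and you first normalise so that $\alpha\equiv\id\pmod{\pi^s}$, which the paper avoids by phrasing things symmetrically in terms of ${}_\alpha\Lambda$ and ${}_\beta\Lambda$).
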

	\begin{proof}
		Our claim is equivalent to the assertion that an isomorphism of 
		$\Lambda$-$\Lambda$-bimodules
		$$
			_{\alpha} (\Lambda/\pi^s\Lambda) \cong {_{\beta} (\Lambda/\pi^s\Lambda) }
		$$
		for automorphisms $\alpha,\beta \in \Aut_\OO(\Lambda)$ implies the existence of an isomorphism 
		$$
			_{\alpha} \Lambda \cong {_{\beta} \Lambda  }
		$$
		Hence it suffices to prove that two $\Lambda$-$\Lambda$-bilattices 
		$M$ and $N$ are isomorphic if and only if $M/\pi^{s}M\cong N/\pi^{s}N$. We may regard $\Lambda$-$\Lambda$-bimodules as $\Lambda\otimes_{\OO}\Lambda^{\opp}$-modules. Hence an application of Maranda's theorem \cite[Theorem (30.14)]{CurtisReinerI} implies the claim.
	\end{proof}
	
	
	\begin{lemma}[cf. \cite{HigmanIsoOrders}]\label{lemma higman}
		Let $\Lambda$ be an $\OO$-order in a separable $K$-algebra. Then the image of the canonical map
		$$
			\Aut_\OO(\Lambda) \longrightarrow \Aut_\OO(\Lambda/\pi^{s+1}\Lambda)
		$$
		is equal to the image of the canonical map
		$$
			\Aut_\OO(\Lambda/\pi^{2s+1}\Lambda) \longrightarrow \Aut_\OO(\Lambda/\pi^{s+1}\Lambda)
		$$
		for all $s\geq d(\Lambda)$
	\end{lemma}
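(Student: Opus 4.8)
The inclusion ``$\subseteq$'' is trivial: if $\alpha\in\Aut_\OO(\Lambda)$, then its image in $\Aut_\OO(\Lambda/\pi^{2s+1}\Lambda)$ reduces, under the further truncation, to the image of $\alpha$ in $\Aut_\OO(\Lambda/\pi^{s+1}\Lambda)$, so the second image is contained in the first. For the reverse inclusion it suffices to prove the following:
\emph{for $s\geq d(\Lambda)$, every $\beta\in\Aut_\OO(\Lambda/\pi^{2s+1}\Lambda)$ admits a lift $\gamma\in\Aut_\OO(\Lambda)$ with $\gamma\equiv\beta\pmod{\pi^{s+1}}$.}
This is a quantitative form of Higman's isomorphism theorem \cite{HigmanIsoOrders} (applied with $\Gamma=\Lambda$), and I would prove it by the ``deformation of structure constants'' method, which I now outline.

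Fix an $\OO$-basis $1=\lambda_1,\ldots,\lambda_r$ of $\Lambda$ with structure constants $c=(c_{ijk})$, so that $\lambda_i\lambda_j=\sum_k c_{ijk}\lambda_k$. Pick preimages $w_i\in\Lambda$ of $\beta(\overline{\lambda_i})\in\Lambda/\pi^{2s+1}\Lambda$; since $\beta$ is an automorphism the $w_i$ again form an $\OO$-basis, with structure constants $c^{(0)}$ satisfying $c^{(0)}\equiv c\pmod{\pi^{2s+1}}$. One then builds inductively $\OO$-bases $w^{(j)}_1,\ldots,w^{(j)}_r$ (with $w^{(0)}_i=w_i$), each congruent to $w_i$ modulo $\pi^{s+1}\Lambda$, whose structure constants $c^{(j)}$ satisfy $c^{(j)}\equiv c\pmod{\pi^{N_j}}$, where $N_0=2s+1$ and $N_{j+1}=2(N_j-d(\Lambda))$. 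The inductive step is the heart of the matter: the ``error'' $\eta_j:=c^{(j)}-c$, viewed as a Hochschild $2$-cochain of $\Lambda$ with values in the \emph{lattice} $\pi^{N_j}\Lambda\cong{}_1\Lambda_1$, is a cocycle up to terms of order $\pi^{2N_j}$ (the failure being governed by the quadratic part of the associativity equations); using that $\pi^{d(\Lambda)}$ annihilates $H^3(\Lambda,-)$ one replaces it by an honest $2$-cocycle differing from it only in degrees $\geq 2N_j-d(\Lambda)$, and then, using that $\pi^{d(\Lambda)}$ annihilates $H^2(\Lambda,-)$, writes that cocycle as the Hochschild coboundary of a $1$-cochain $\epsilon_j\in\End_\OO(\Lambda)$ \emph{with values in $\pi^{N_j-d(\Lambda)}\Lambda$} --- the loss of $d(\Lambda)$ powers of $\pi$ is exactly the price of dividing by $\pi^{d(\Lambda)}$, and is affordable because $H^i(\Lambda\otimes_\OO K,-)=0$ by separability. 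Passing from the basis $(w^{(j)}_i)$ to $(w^{(j)}_i+\epsilon_j(w^{(j)}_i))$ alters the structure constants by this coboundary plus terms quadratic in $\epsilon_j$, producing $c^{(j+1)}$ with $c^{(j+1)}\equiv c\pmod{\pi^{N_{j+1}}}$. Crucially $N_{j+1}>N_j$ precisely because $N_j\geq N_0=2s+1\geq 2d(\Lambda)+1$, and $N_j-d(\Lambda)\geq s+1$ throughout, so every correction lies in $\pi^{s+1}\Lambda$.

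Since $\OO$, hence $\End_\OO(\Lambda)$, is complete and $N_j\to\infty$, the bases $(w^{(j)}_i)$ converge to an $\OO$-basis $z_1,\ldots,z_r$ with structure constants exactly equal to $c$ and with $z_i\equiv w_i\equiv\beta(\lambda_i)\pmod{\pi^{s+1}\Lambda}$. The $\OO$-linear map $\gamma\colon\lambda_i\mapsto z_i$ is therefore an $\OO$-algebra endomorphism of $\Lambda$; it reduces modulo $\pi$ to the automorphism $\beta\bmod\pi$ of $\Lambda/\pi\Lambda$, so $\gamma$ is surjective by Nakayama's lemma and, $\Lambda$ being a finitely generated free $\OO$-module, bijective. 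Hence $\gamma\in\Aut_\OO(\Lambda)$ and $\gamma\equiv\beta\pmod{\pi^{s+1}}$, which is what was needed.

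The only delicate point will be the $\pi$-adic bookkeeping in the inductive step: one must track carefully that $\eta_j$ is genuinely a cocycle only modulo $\pi^{2N_j}$, correct this using the $\pi^{d(\Lambda)}$-bound on $H^3$, and check that ``$\eta_j$ becomes a coboundary after multiplication by $\pi^{d(\Lambda)}$'' forces $\epsilon_j$ into $\pi^{N_j-d(\Lambda)}\Lambda$ rather than merely $\pi^{N_j}\Lambda$. Everything else I expect to be formal. Note it is essential to work with Hochschild cohomology having \emph{lattice} coefficients, where the uniform bound $\pi^{d(\Lambda)}$ is available; for torsion coefficients no such bound exists, which is exactly why $\beta$ cannot be lifted one power of $\pi$ at a time and why the deformation must be set up at the level of structure constants.
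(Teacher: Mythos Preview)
Your approach is correct and converges, but the paper organises the argument differently and more simply. Rather than tracking structure constants, the paper lifts $\beta$ to an $\OO$-linear map $\alpha_1:\Lambda\to\Lambda$ and works with the multiplicativity defect $f(x,y)=\alpha_i(xy)-\alpha_i(x)\alpha_i(y)$ directly. A two-line computation shows that this $f$ satisfies $f(x,yz)-f(xy,z)=f(x,y)\alpha_i(z)-\alpha_i(x)f(y,z)$ \emph{exactly}; after dividing out $\pi^{2s+i}$ and untwisting via $\alpha_i^{-1}$ one gets an honest Hochschild $2$-cocycle with values in $\Lambda/\pi^{2s+i}\Lambda$, so only the $H^2$ bound is required---no approximate-cocycle step and no appeal to $H^3$. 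The correction $\alpha_{i+1}(x)=\alpha_i(x)+\pi^{s+i}\alpha_i(h(x))$ then raises the accuracy from $\pi^{2s+i}$ to $\pi^{2s+i+1}$, one power of $\pi$ per step. Your scheme trades this linear convergence for quadratic convergence $N_{j+1}=2(N_j-d(\Lambda))$, at the cost of the extra $H^3$ correction and the $\pi$-adic bookkeeping you flag; either route works, but the paper's is shorter.

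One correction to your closing paragraph: by definition, $\pi^{d(\Lambda)}\cdot H^i(\Lambda,T)=0$ for \emph{all} bimodules $T$, torsion included, and the paper in fact applies this with the torsion module $\Lambda/\pi^{2s+i}\Lambda$. The obstruction to lifting $\beta$ one power of $\pi$ at a time is not a lattice-versus-torsion issue; it is simply that each cohomological correction costs $d(\Lambda)$ powers of $\pi$, and one needs the initial headroom $s\ge d(\Lambda)$ so that the correction stays above level $s+1$ while the accuracy still strictly increases. (Also a small slip: in your opening paragraph the trivial inclusion runs the other way---the image coming from $\Aut_\OO(\Lambda)$ is contained in the image coming from $\Aut_\OO(\Lambda/\pi^{2s+1}\Lambda)$, not conversely.)
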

	\begin{proof}
		The proof from \cite{HigmanIsoOrders} carries over to this situation almost verbatim. We still provide a complete proof here, as the proof found in \cite{HigmanIsoOrders} technically proves a different statement and contains several misprints.
	
		Start with an element $\beta \in \Aut_\OO(\Lambda/\pi^{2s+1}\Lambda)$. We can find an $\OO$-linear map $\alpha_1:\ \Lambda\longrightarrow \Lambda$ which reduces to $\beta$ mod $\pi^{2s+1}$. We need to show that there is an algebra automorphism $\alpha$ of $\Lambda$ such that $\alpha\equiv \alpha_1 \ (\textrm{mod }\pi^{s+1})$.

		We will proceed by induction. 
		Namely, we will show that if $\alpha_i:\ \Lambda \longrightarrow \Lambda$ is an $\OO$-linear map 
		inducing an algebra automorphism on $\Lambda/\pi^{2s+i}\Lambda$ for some $i \geq 1$, then there is an an $\OO$-linear map $\alpha_{i+1}:\ \Lambda \longrightarrow \Lambda$ inducing an algebra automorphism of $\Lambda/\pi^{2s+i+1}\Lambda$ such that 
		\begin{equation}\label{converge}
		\alpha_{i+1} \equiv \alpha_{i} \ (\textrm{mod }\pi^{s+i}).
		\end{equation}
		Once we have shown this, we can simply take $$\alpha = \lim_{i\rightarrow \infty} \alpha_i$$
		Equation~\eqref{converge} shows that this is a convergent series, and that the reduction mod $\pi^{s+1}$ of $\alpha$ is exactly $\beta$. Moreover, $\alpha$ is a bijective since $\beta$ is bijective (bijectivity can be checked mod $\pi$, thanks to the Nakayama lemma). It is an automorphism since each  $\alpha_i$ induces an automorphism on $\Lambda/\pi^{s+i}\Lambda$.
		
		Now let us find $\alpha_{i+1}$. Define an $\OO$-linear map 
		\begin{equation}
			f:\ \Lambda\otimes_{\OO}\Lambda \longrightarrow \Lambda: \ 
			x\otimes y \mapsto \alpha_i(xy)-\alpha_i(x)\cdot \alpha_i(y).
		\end{equation}
		Then $f$ satisfies the relation
		\begin{equation}\label{eqn cocycle}
			f(x\otimes yz)-f(xy\otimes z)=f(x\otimes y) \cdot \alpha_i(z)
			- \alpha_i(x) \cdot f(y\otimes z)
		\end{equation}
		Because $\alpha_i$ induces, by assumption, an automorphism mod $\pi^{2s+i}$, it follows that the image of $f$ is contained in 
		$\pi^{2s+i}\Lambda$, that is, $f=\pi^{2s+i}\cdot g$ for some $\OO$-linear map $g:\ \Lambda \otimes_\OO \Lambda\longrightarrow \Lambda$, subject to the same identity as $f$ (i.e. \eqref{eqn cocycle}). Since $\alpha_i$ is invertible we can define the map
		$$
			\bar g:\ \Lambda\otimes_\OO \Lambda \longrightarrow \Lambda/\pi^{2s+i}\Lambda: \ x\otimes y \mapsto \alpha_i^{-1}(g(x\otimes y)) + \pi^{2s+i}\Lambda
		$$
		Equation~\eqref{eqn cocycle} yields
		\begin{equation}\label{eqn cocycle2}
			\bar g(x\otimes yz)-\bar g (xy\otimes z)=\bar g(x\otimes y) \cdot z
			- x \cdot \bar g (y\otimes z)
		\end{equation}		
		which implies that $\bar g$ defines an element of $H^2(\Lambda, \Lambda/\pi^{2s+i}\Lambda)$. By assumption, $\pi^s$ annihilates this cohomology group, that is,
		\begin{equation}
			\pi^s\cdot \bar g (x\otimes y) = x\cdot h(y)+h(x)\cdot y -h(x\cdot y) + \pi^{2s+i}\Lambda
		\end{equation} 	
		for some $\OO$-linear map $h:\ \Lambda\longrightarrow \Lambda$. 
		Define
		\begin{equation}
			\alpha_{i+1}(x) = \alpha_i(x) + \pi^{s+i} \cdot \alpha_i(h(x)).
		\end{equation}
		We have
		\begin{equation}
			\begin{array}{rcl}
				\alpha_{i+1}(x\cdot y) &=& \alpha_i(x\cdot y)+ \pi^{s+i}\cdot  \alpha_i(h(x\cdot y))\\
				&=& \alpha_i(x)\cdot \alpha_i(y)+f(x\otimes y)+\pi^{s+i} \cdot \alpha_i(h(x\cdot y)) \\
				&=&\alpha_i(x) \cdot\alpha_i(y)+ \pi^{s+i}\cdot (\pi^s\cdot g(x\otimes y) + \alpha_i(h(x\cdot y)))
			\end{array}
		\end{equation}
		Now 
		\begin{equation}
			\begin{array}{rcll}
			\pi^s\cdot g(x\otimes y) + \alpha_i(h(x\cdot y)) 
			&\equiv& \alpha_i(\pi^s \cdot \bar g (x\otimes y) + h(x\cdot y))
			  \\
			 &\equiv& \alpha_i(x\cdot h(y)+h(x)\cdot y)
			 			 & (\textrm{mod }\pi^{2s+i})
			 \end{array}
		\end{equation}
		Since $2\cdot (s+i) \geq 2s+i+1$ we get
		\begin{equation}
			\begin{array}{rcll}
				\alpha_{i+1}(x\cdot y)  &\equiv& \alpha_i(x)\cdot \alpha_i(y)+\pi^{s+i} \cdot (\alpha_i(x\cdot h(y)+h(x)\cdot y))  \\
				&\equiv& (\alpha_i(x)+\pi^{s+i}\cdot \alpha_i(h(x)))\cdot  
				(\alpha_i(y)+\pi^{s+i}\cdot \alpha_i(h(y))) \\
				&\equiv& \alpha_{i+1}(x)\cdot \alpha_{i+1}(y) & (\textrm{mod } \pi^{2s+i+1})
			\end{array}
		\end{equation}
		That is, $\alpha_{i+1}$ induces an algebra automorphism on $\Lambda/\pi^{2s+i+1}$ as claimed.
	\end{proof}
	
	\begin{thm}\label{thm_out_algebraic}
		Let $\Lambda$ be an $\OO$-order in a separable $K$-algebra, and let  $s\geq \max\{ d(\Lambda), d(\Lambda \otimes_\OO \Lambda^{\opp}) \}$.
		Then the canonical map 
		\begin{equation}\label{eqn thm1}
			\Out_\OO(\Lambda) \longrightarrow \Out_\OO(\Lambda/\pi^{s+1}\Lambda)
		\end{equation}
		is injective and its image is equal to the image of the canonical map
		\begin{equation}\label{eqn thm2}
			\Out_\OO(\Lambda/\pi^{t}\Lambda) \longrightarrow \Out_\OO(\Lambda/\pi^{s+1}\Lambda).
		\end{equation}
		for any $t \geq 2s+1$.
		In particular, if  $k$ is algebraically closed and $r\in \N$ is chosen such that $\pi^r \OO=p\OO$, then $\Out_\OO(\Lambda)$ is isomorphic to an algebraic subgroup of the algebraic group $\Out_\OO(\Lambda/p^{n}\Lambda)$, for any natural number $n \geq \frac{s+1}{r}$.
	\end{thm}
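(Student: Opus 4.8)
The plan is to assemble the statement from the two preceding lemmas (Lemma~\ref{lemma maranda} and Lemma~\ref{lemma higman}) together with Corollary~\ref{corollary out mod pn alg}. First I would fix $s \geq \max\{d(\Lambda), d(\Lambda\otimes_\OO\Lambda^{\opp})\}$. Injectivity of \eqref{eqn thm1} is immediate: since $s+1 \geq d(\Lambda\otimes_\OO\Lambda^{\opp})+1$, Lemma~\ref{lemma maranda} applies directly. For the statement about the image, I would argue that the image of \eqref{eqn thm1} is contained in the image of \eqref{eqn thm2}, and conversely. One inclusion is trivial, because the map $\Out_\OO(\Lambda)\to\Out_\OO(\Lambda/\pi^{s+1}\Lambda)$ factors through $\Out_\OO(\Lambda/\pi^t\Lambda)$ for any $t$. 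The reverse inclusion is where Lemma~\ref{lemma higman} enters: that lemma (with its $s$) says the image of $\Aut_\OO(\Lambda)\to\Aut_\OO(\Lambda/\pi^{s+1}\Lambda)$ equals the image of $\Aut_\OO(\Lambda/\pi^{2s+1}\Lambda)\to\Aut_\OO(\Lambda/\pi^{s+1}\Lambda)$; composing with $\Aut_\OO(\Lambda/\pi^t\Lambda)\to\Aut_\OO(\Lambda/\pi^{2s+1}\Lambda)$ for $t\geq 2s+1$ and passing to the quotient by inner automorphisms upgrades this to the corresponding statement for $\Out$. A small point to check carefully here is that passing from $\Aut$ to $\Out$ is compatible with taking images — this is fine because $\Inn(\Lambda)$ surjects onto $\Inn(\Lambda/\pi^m\Lambda)$ for every $m$, as every unit of $\Lambda/\pi^m\Lambda$ lifts to a unit of $\Lambda$ (again by Nakayama), so the image of $\Out_\OO(\Lambda)$ in $\Out_\OO(\Lambda/\pi^{s+1}\Lambda)$ is exactly the image of the image of $\Aut_\OO(\Lambda)$.

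For the final ``in particular'' clause, assume $k$ is algebraically closed and pick $r$ with $\pi^r\OO = p\OO$. Given $n \geq (s+1)/r$ we have $p^n\OO = \pi^{rn}\OO \subseteq \pi^{s+1}\OO$, so $\Lambda/\pi^{s+1}\Lambda$ is a quotient of $\Lambda/p^n\Lambda$, and by Corollary~\ref{corollary out mod pn alg} the canonical map $\Out_\OO(\Lambda/p^n\Lambda)\to\Out_\OO(\Lambda/\pi^{s+1}\Lambda)$ is a homomorphism of algebraic groups. I would then take $t = rn' $ for $n'$ large enough that $rn'\geq 2s+1$ (e.g. any $n'\geq n$ with $rn'\geq 2s+1$), so that \eqref{eqn thm2} with $t = rn'$ is again a homomorphism of algebraic groups $\Out_\OO(\Lambda/p^{n'}\Lambda)\to\Out_\OO(\Lambda/\pi^{s+1}\Lambda)$. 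The image of a homomorphism of (affine) algebraic groups is a closed subgroup; hence the common image of \eqref{eqn thm1} and \eqref{eqn thm2} is an algebraic subgroup of $\Out_\OO(\Lambda/\pi^{s+1}\Lambda)$, and by injectivity of \eqref{eqn thm1} this identifies $\Out_\OO(\Lambda)$ with that algebraic subgroup. Finally, to land inside $\Out_\OO(\Lambda/p^n\Lambda)$ rather than $\Out_\OO(\Lambda/\pi^{s+1}\Lambda)$, note that $\Out_\OO(\Lambda)$ also injects into $\Out_\OO(\Lambda/p^n\Lambda)$ — because $p^n\OO\subseteq\pi^{s+1}\OO$ means the map factors $\Out_\OO(\Lambda)\to\Out_\OO(\Lambda/p^n\Lambda)\to\Out_\OO(\Lambda/\pi^{s+1}\Lambda)$, and the composite is injective — and its image there is the preimage under the algebraic-group homomorphism $\Out_\OO(\Lambda/p^n\Lambda)\to\Out_\OO(\Lambda/\pi^{s+1}\Lambda)$ of the closed subgroup just constructed, intersected appropriately; more directly, one reruns the image argument one level up using that $p^{n'}\OO \subseteq \pi^{2s+1}\OO$ for $n'$ large, so the image of $\Out_\OO(\Lambda)$ in $\Out_\OO(\Lambda/p^n\Lambda)$ equals the image of $\Out_\OO(\Lambda/p^{n'}\Lambda)\to\Out_\OO(\Lambda/p^n\Lambda)$, a closed subgroup.

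The main obstacle I anticipate is purely bookkeeping: keeping the four relevant moduli ($\pi^{s+1}$, $\pi^{2s+1}$, $\pi^t$, and the powers of $p$, i.e. $\pi^{rn}$) consistent, and in particular choosing the auxiliary exponent $n'$ (or $t$) simultaneously large enough for Higman's lemma ($\geq 2s+1$) and a multiple of $r$ (so that the relevant quotient rings are genuinely of the form $\Lambda/p^{n'}\Lambda$ and the maps between them are morphisms of algebraic groups via Corollary~\ref{corollary out mod pn alg}). There is no serious mathematical content beyond the two lemmas and the closed-image property of morphisms of affine algebraic groups; the argument is essentially a diagram chase through a tower of truncations, and the only thing to be vigilant about is that every map in sight between the $\Out$-groups attached to $p$-power truncations is a homomorphism of algebraic groups, which is exactly what Corollary~\ref{corollary out mod pn alg} provides.
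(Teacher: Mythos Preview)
Your proposal is correct and follows exactly the paper's (very terse) proof: the paper simply cites Lemma~\ref{lemma maranda} for injectivity, Lemma~\ref{lemma higman} for the equality of images, and Corollary~\ref{corollary out mod pn alg} for the algebraic-group structure, and you have supplied the natural details (including the surjection $\Inn(\Lambda)\twoheadrightarrow\Inn(\Lambda/\pi^m\Lambda)$, which the paper leaves implicit). One small bookkeeping correction in your final ``rerun'': when you replace $\pi^{s+1}$ by $p^n=\pi^{rn}$ you are reapplying the first part of the theorem with $s$ replaced by $rn-1$, so the required condition on $n'$ is $rn'\geq 2(rn-1)+1=2rn-1$, not merely $rn'\geq 2s+1$---exactly the kind of index-tracking you flagged as the main obstacle.
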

	\begin{proof}
		The injectivity of the map in equation~\eqref{eqn thm1} follows immediately from Lemma~\ref{lemma maranda}. The fact that the images of the maps in equations \eqref{eqn thm1} and \eqref{eqn thm2} coincide follows from Lemma~\ref{lemma higman}.
		The claim that the morphism given in \eqref{eqn thm2} is a morphism of algebraic groups (making its image an algebraic group) follows from Corollary~\ref{corollary out mod pn alg}.
	\end{proof}

	\section{K\"ulshammer reduction}
	
	Throughout this section let $(K,\OO,k)$ be a $p$-modular system for some $p>0$ and assume that $k$ is algebraically closed.
	In \cite{KuelshammerDonovan}, K\"ulshammer showed that any block 
	$B$ of a group algebra $kG$ for a finite group $G$ is Morita equivalent to a \emph{crossed product}
	of a block $B'$ of $kH$ and a $p'$-group $X$, where $H$ is a finite group and $B'$ has the property that its defect groups generate $H$.
	The order of the $p'$-group $X$ is also bounded in terms of $D$. Moreover, K\"ulshammer showed that there are only finitely many isomorphism classes of crossed products between any given finite-dimensional $k$-algebra $A$ and any given $p'$-group $X$. 
	This last result is the only part of \cite{KuelshammerDonovan} where the arguments rely on the coefficient ring being a field, while all other results carry over to blocks over $\OO$ with minimal effort. The main ingredient in the proof that there are only finitely many crossed products of $A$ and $X$ is the fact that there are only finitely many conjugacy classes of homomorphisms $X\longrightarrow \Out_k(A)$ on account of $\Out_k(A)$ being an algebraic group over $k$. Now Theorem~\ref{thm_out_algebraic} shows that $\Out_\OO(\Lambda)$ is an algebraic group as well, where $\Lambda$ is an $\OO$-order in a separable $K$-algebra. Hence one would expect that K\"ulshammer's results remain true if $k$ is replaced by $\OO$. That is what we show in this section, closely following the line of reasoning from \cite{KuelshammerDonovan}.
	
	\begin{defi}
		Let $G$ be a finite group.
		\begin{enumerate}
		\item A $G$-graded ring $A=\bigoplus_{g\in G} A_g$ is called a \emph{crossed product of $R=A_1$ and $G$} if $A_g\cap \UU(A) \neq \emptyset$ for all $g\in G$.
		
		\item We call two $G$-graded rings $A$ and $B$ with $A_1=B_1=R$ \emph{weakly equivalent} if they are isomorphic as $G$-graded rings, that is, if there is a ring isomorphism $\varphi:\ A \longrightarrow B$ such that $\varphi(A_g)=B_g$ for all $g\in G$. 
		
		\item Let $R$ be a ring. A \emph{parameter set} of $G$ in $\Lambda$ is a pair $(\alpha,\gamma)$ such that 
		$\alpha:\ G\longrightarrow \Aut_{\Z} (R):\ g \mapsto \alpha_g$ and $\gamma:\ G\times G \longrightarrow \mathcal U(R)$ are maps satisfying the identities
		\begin{equation}\label{eqn id alpha gamma}
			\alpha_g\circ \alpha_h= \iota_{\gamma(g,h)}\circ \alpha_{gh} \quad \textrm{and} \quad \gamma(g,h)\cdot \gamma(gh,k)=\alpha_g(\gamma(h,k))\cdot \gamma(g,hk)
		\end{equation}
		for all $g,h,k\in R$, where $\iota_{\gamma(g,h)}$ denotes the inner automorphism of $R$ induced by conjugation by $\gamma(g,h)$.
		
		\item Two parameter sets $(\alpha,\gamma)$ and $(\alpha',\gamma')$ are called \emph{equivalent} if there is a map $r:\ G\longrightarrow \mathcal U(R)$ such that
		\begin{equation}\label{trafo param sys}
			\alpha'_g= \iota_{r(g)}\circ \alpha_g \quad\textrm{and}\quad \gamma'(g,h)=r(g)\cdot  \alpha_g(r(h))	\cdot \gamma(g,h) \cdot r(gh)^{-1}		
		\end{equation}
		for all $g,h\in G$.
		\end{enumerate}
	\end{defi}
	
	\begin{defi}\label{defi crossed product}
	A parameter set $(\alpha, \gamma)$ defines a crossed product 
	$A=R*_{(\alpha,\gamma)}G$. We construct $A$ as a free $G$-graded left $R$-module with basis $\{u_g\ | \ g\in G\}$ such that the homogeneous component $A_g$ is equal to $R u_g$. Multiplication is defined by the rule
	\begin{equation}\label{eqn mult in crossed product}
		r_g u_g \cdot r_hu_h = r_g\alpha_g(r_h)\gamma(g,h) u_{gh} \textrm{ for all $r_g,r_h\in R$ }
	\end{equation}
	extended additively and distributively.
	\end{defi}

	\begin{lemma}[{see \cite[Section 4]{KuelshammerDonovan}}]
	\begin{enumerate}
		\item If $(\alpha,\gamma)$ and $(\alpha',\gamma')$ are equivalent parameter sets, then the induced homomorphisms $\bar \alpha:\ G \longrightarrow \Out_{\Z}(R)$ and $\bar \alpha':\ G \longrightarrow \Out_{\Z}(R)$ are equal.
		\item The parameter sets $(\alpha,\gamma)$ such that $\alpha$ induces a given homomorphism $\omega:\ G \longrightarrow \Out_{\Z}(R)$ are in bijection with the set $H^2(G, {^\omega\UU(Z(R))})$, where ${^\omega\UU(Z(R))}$ denotes the $\Z G$-module which is equal to $\UU (Z(R))$ as an abelian group, the action of $G$ being given by $\omega|_{Z(R)}\in \Out_{\Z}(Z(R))=\Aut_{\Z}(Z(R))$.
	\end{enumerate}
	\end{lemma}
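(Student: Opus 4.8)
Part (1) is immediate: by the first identity in \eqref{trafo param sys} we have $\alpha'_g = \iota_{r(g)}\circ \alpha_g$ with $\iota_{r(g)}\in\Inn(R)$, so $\alpha_g$ and $\alpha'_g$ have the same image $\bar\alpha_g=\bar\alpha'_g$ in $\Out_\Z(R)=\Aut_\Z(R)/\Inn(R)$ for every $g\in G$. (Reducing the first identity in \eqref{eqn id alpha gamma} modulo $\Inn(R)$ also shows that $\bar\alpha$ is itself a homomorphism $G\to\Out_\Z(R)$, so it makes sense to speak of the fibres of $(\alpha,\gamma)\mapsto\bar\alpha$.)

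For part (2) the plan is to carry out the usual Schreier-type bookkeeping, taking care that although $\UU(R)$ need not be abelian, its subgroup $\UU(Z(R))$ is. The one structural point that makes everything go through is that a lift $\alpha\colon G\to\Aut_\Z(R)$ of $\omega$ restricts on $Z(R)$ to a map depending only on $\omega$: inner automorphisms act trivially on $Z(R)$, and $\Out_\Z(Z(R))=\Aut_\Z(Z(R))$ because $Z(R)$ is commutative. This is exactly what turns $\UU(Z(R))$ into the $\Z G$-module ${}^\omega\UU(Z(R))$, and it yields $\alpha_g(z(h,k))={}^{\omega(g)}z(h,k)$ for any $\UU(Z(R))$-valued function $z$ and any lift $\alpha$ of $\omega$. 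I would first fix, once and for all, a parameter set $(\alpha_0,\gamma_0)$ with $\bar\alpha_0=\omega$; such a thing exists precisely when $\omega$ lies in the image of $(\alpha,\gamma)\mapsto\bar\alpha$ (the obstruction to realising a general $\omega$ being a class in $H^3(G,{}^\omega\UU(Z(R)))$ built from the lifts), but for $\omega$ coming from an honest crossed product, which is the only case of interest here, this never arises.

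I would then argue in three steps. \emph{(a) Reduction to a fixed $\alpha$-part.} Given any parameter set $(\alpha,\gamma)$ with $\bar\alpha=\omega$, write $\alpha_g=\iota_{s(g)}\circ\alpha_{0,g}$ with $s(g)\in\UU(R)$; applying the transformation \eqref{trafo param sys} with $r(g)=s(g)^{-1}$ produces an equivalent parameter set whose $\alpha$-part is exactly $\alpha_0$. \emph{(b) Parameter sets with $\alpha$-part $\alpha_0$ form a coset of $Z^2$.} If $(\alpha_0,\gamma)$ and $(\alpha_0,\gamma')$ are parameter sets, then the first identity in \eqref{eqn id alpha gamma} gives $\iota_{\gamma'(g,h)}=\alpha_{0,g}\alpha_{0,h}\alpha_{0,gh}^{-1}=\iota_{\gamma(g,h)}$, hence $z(g,h):=\gamma'(g,h)\gamma(g,h)^{-1}\in\UU(Z(R))$; substituting $\gamma'=z\gamma$ into the second identity in \eqref{eqn id alpha gamma} and moving the (central) values of $z$ past those of $\gamma$ shows $z\in Z^2(G,{}^\omega\UU(Z(R)))$. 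Conversely the same computation in reverse shows that for every $2$-cocycle $z$ the pair $(\alpha_0,z\gamma_0)$ is again a parameter set; combined with (a), every parameter set inducing $\omega$ is equivalent to one of the form $(\alpha_0,z\gamma_0)$ with $z\in Z^2(G,{}^\omega\UU(Z(R)))$. \emph{(c) Equivalence $\leftrightarrow$ coboundaries.} Any equivalence between two parameter sets with the same $\alpha$-part $\alpha_0$ must, by the first identity in \eqref{trafo param sys}, be realised by an $r$ with $\iota_{r(g)}=\id$, i.e. $r(g)\in\UU(Z(R))$; the second identity in \eqref{trafo param sys} then reads $z'z^{-1}=\partial r$, with $(\partial r)(g,h)=r(g)\,{}^{\omega(g)}r(h)\,r(gh)^{-1}$. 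Hence $(\alpha_0,z\gamma_0)\mapsto [z]$ is a well-defined bijection from the set of equivalence classes of parameter sets inducing $\omega$ onto $H^2(G,{}^\omega\UU(Z(R)))$ (depending, as usual, on the base point $(\alpha_0,\gamma_0)$; invariantly, this set of classes is a torsor under the group).

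I expect the main effort to lie in the two cocycle verifications in step (b) — that $z=\gamma'\gamma^{-1}$ is a $2$-cocycle, and that twisting $\gamma_0$ by a $2$-cocycle preserves the axioms \eqref{eqn id alpha gamma} — and the one genuine pitfall is the non-commutativity of $\UU(R)$: every reordering of factors in these computations must move only central elements (the values of $z$ and of $r$), never the values of $\gamma$ or the automorphisms $\alpha_g$ past one another. Everything else is formal, the whole argument being the analogue, in this algebraic setting, of the classification of group extensions with non-abelian kernel, with $\UU(R)$ and $\UU(Z(R))$ in the roles of the kernel and its centre.
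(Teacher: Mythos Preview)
The paper does not prove this lemma at all; it simply records the statement with a reference to \cite[Section~4]{KuelshammerDonovan}. Your proposal supplies exactly the standard Schreier-type argument that underlies that reference, and it is correct: the reduction to a fixed $\alpha$-part, the identification of the remaining freedom in $\gamma$ with $Z^2(G,{}^\omega\UU(Z(R)))$, and the matching of equivalence with coboundaries are all handled properly, including the one genuinely delicate point that every factor you commute past another is central. Your remark that the bijection depends on the base point $(\alpha_0,\gamma_0)$ and that the set of classes is really an $H^2$-torsor is also the right way to read the (slightly loosely worded) statement.
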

	
	\begin{lemma}
	If $G$ is a finite group of order co-prime to $p$ and $\Lambda$ is a commutative $\OO$-order on which $G$ acts by $\OO$-algebra automorphisms, then $H^i(G, {\UU (\Lambda)})$ is finite for all $i\geq 1$.
	\end{lemma}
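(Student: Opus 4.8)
The plan is to reduce the statement to an elementary fact about the unit group, using that for $i\ge 1$ the cohomology $H^i(G,M)$ of any $\Z G$-module $M$ is annihilated by $N:=|G|$. Write $A:=\UU(\Lambda)$, which is a $\Z G$-module since $G$ acts by ring automorphisms. The point I would isolate is the following claim $(\dagger)$: the $N$-th power map $x\mapsto x^N$ is a surjective endomorphism of $A$ whose kernel $A[N]=\{u\in A\mid u^N=1\}$ is \emph{finite}. Granting $(\dagger)$, the short exact sequence $0\to A[N]\to A\to A\to 0$ of $\Z G$-modules (the surjection being $x\mapsto x^N$) yields, for each $i\ge1$, an exact segment $H^i(G,A[N])\to H^i(G,A)\to H^i(G,A)$ in which the second map is multiplication by $N$ (the $N$-th power endomorphism of $A$ induces multiplication by $N$ on cohomology); since multiplication by $N$ is zero on $H^i(G,A)$ for $i\ge1$, the first map is surjective. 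As $A[N]$ is finite, $H^i(G,A[N])$ is finite, and therefore so is $H^i(G,\UU(\Lambda))$.

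It remains to prove $(\dagger)$. First I would reduce modulo $\pi$ using the $G$-equivariant exact sequence $1\to 1+\pi\Lambda\to\UU(\Lambda)\to\UU(\Lambda/\pi\Lambda)\to1$, whose last map is surjective because $\Lambda$ is $\pi$-adically complete. On the kernel $1+\pi\Lambda$ the map $x\mapsto x^N$ is in fact \emph{bijective}: writing $1+\pi\Lambda=\varprojlim_n(1+\pi\Lambda)/(1+\pi^n\Lambda)$ and filtering each finite-length quotient by the subgroups $1+\pi^t\Lambda$, the successive quotients are the $k$-vector spaces $\pi^t\Lambda/\pi^{t+1}\Lambda$, on which $x\mapsto x^N$ induces multiplication by $N$, invertible since $p\nmid N$; a dévissage and passage to the inverse limit give bijectivity. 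For $\bar\Lambda:=\Lambda/\pi\Lambda$, a finite-dimensional commutative Artinian $k$-algebra, decompose $\bar\Lambda\cong\prod_j R_j$ into local rings; since $k$ is algebraically closed each residue field $R_j/\rad(R_j)$ equals $k$, so the structure map $k\to R_j$ splits the reduction and $\UU(R_j)\cong k^\times\times(1+\rad(R_j))$. The $N$-th power map is surjective on $k^\times$ (algebraically closed field), bijective on $1+\rad(R_j)$ (nilpotent ideal, $p\nmid N$, the same dévissage), and has kernel $k^\times[N]$, which is finite. Hence $x\mapsto x^N$ is surjective on $\UU(\bar\Lambda)$ with finite kernel.

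Assembling $(\dagger)$ is then a short diagram chase. For surjectivity: given $u\in\UU(\Lambda)$, choose an $N$-th root of its image in $\UU(\bar\Lambda)$ and lift it to $v\in\UU(\Lambda)$, so that $uv^{-N}\in1+\pi\Lambda$; let $w\in1+\pi\Lambda$ be its $N$-th root there, so that $(vw)^N=u$. For finiteness of $A[N]$: if $u^N=1$ then the image of $u$ lies in the finite group $\UU(\bar\Lambda)[N]$, and reduction mod $\pi$ is injective on $A[N]$ because $x\mapsto x^N$ is injective on $1+\pi\Lambda$; hence $A[N]$ embeds into a finite group.

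I do not expect a real obstacle here; the only mildly technical points are the two dévissage arguments showing $x\mapsto x^N$ is bijective on $1+\pi\Lambda$ (where $\pi$-adic completeness is used) and on $1+\rad(R_j)$, together with the bookkeeping of $G$-equivariance of all the maps and filtrations involved. An alternative route, closer to Külshammer's argument over $k$, is to filter $\UU(\Lambda)$ directly and use that $H^{\ge1}(G,-)$ vanishes on each subquotient $\pi^t\Lambda/\pi^{t+1}\Lambda$ and $(\rad\bar\Lambda)^t/(\rad\bar\Lambda)^{t+1}$ since $p\nmid N$, thereby reducing to the finiteness of $H^i(G,(k^\times)^m)$, which follows from Shapiro's lemma and the finiteness of $k^\times[N]$; formulation $(\dagger)$ merely packages this more efficiently.
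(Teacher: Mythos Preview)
Your argument is correct. The essential mechanism is the same as the paper's---both exploit that the $|G|$-th power map is bijective on the ``pro-unipotent'' part of $\UU(\Lambda)$ because $p\nmid |G|$---but the organization differs. The paper filters by the Jacobson radical: from $1\to 1+J(\Lambda)\to\UU(\Lambda)\to\UU(\Lambda/J(\Lambda))\to 1$ it shows $H^i(G,1+J(\Lambda))=0$ (since the $|G|$-th power map is bijective on $1+J(\Lambda)$, making multiplication by $|G|$ simultaneously zero and an isomorphism on cohomology) and then simply cites K\"ulshammer's result over $k$ for the finite-dimensional quotient. You instead package everything into the single statement $(\dagger)$ about the $N$-th power map on all of $\UU(\Lambda)$, filtering by $\pi\Lambda$ rather than $J(\Lambda)$, and you redo the $k$-algebra case from scratch via the decomposition $\UU(R_j)\cong k^\times\times(1+\rad R_j)$. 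Your route is more self-contained (no external citation needed) and the claim $(\dagger)$ is a clean intermediate statement; the paper's route is shorter on the page because it outsources the residue-field case. The ``alternative route'' you sketch at the end is essentially what the paper does.
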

	\begin{proof}
		We have a short exact sequence
		$
			0\longrightarrow {(1+J(\Lambda))}\longrightarrow {\UU (\Lambda)} \longrightarrow { \UU(\Lambda/J(\Lambda))} \longrightarrow 0
		$, where $J(\Lambda)$ denotes the Jacobson radical of $\Lambda$. The resulting long exact sequence of cohomology yields exact sequences $H^i(G, 1+J(\Lambda))\longrightarrow H^i(G, \UU(\Lambda)) \longrightarrow H^i(G, \UU(\Lambda/J(\Lambda)))$ for all $i \geq 1$. Hence it suffices to show that $H^i(G, 1+J(\Lambda))$ and $H^i(G, \UU(\Lambda/J(\Lambda)))$ are finite. As $\Lambda/J(\Lambda)$ is a finite-dimensional $k$-algebra, the latter follows immediately from \cite[Proposition in section 3]{KuelshammerDonovan}. And $H^i(G, 1+J(\Lambda))$ is in fact zero, as the map $1+J(\Lambda)\longrightarrow 1+J(\Lambda):\ 1+x\mapsto (1+x)^{|G|}=1+|G|\cdot x+(\cdots)\cdot x^2$ is bijective (surjectivity follows from Hensel's lemma, and if $x$ lies in the kernel, then $1+x^2\Lambda=(1+x)^{|G|} + x^2 \Lambda=1+|G|\cdot x + x^2\Lambda$, i.e. $|G|\cdot x \in x^2\Lambda$, which is impossible).
	\end{proof}
	
	\begin{defi}
		Let $S$ be a commutative ring, let $R$ be an $S$-algebra and let $G$ be a finite group.
		\begin{enumerate}
		\item Let $A$ be a crossed product of $R$ and $G$.
		We say that $A$ is an \emph{$S$-linear crossed product} if the image of $S$ under the canonical embedding $R\hookrightarrow A$ is contained in $Z(A)$ (turning $A$ into an $S$-algebra and the natural embedding into an $S$-algebra homomorphism).
		\item We call a parameter set $(\alpha,\gamma)$ of $G$ in $R$ an \emph{$S$-linear parameter set} if the image of $\alpha$ is contained in $\Aut_S(R)$ (rather than just $\Aut_{\Z}(R)$).  
		\end{enumerate}
	\end{defi}
	
	\begin{prop}\label{prop s lin param}
		Assume $R$ is an $S$-algebra for some commutative ring $S$.
		The crossed product $R*_{(\alpha,\gamma)}G$ is $S$-linear if and only if the parameter set $(\alpha,\gamma)$ is $S$-linear.
	\end{prop}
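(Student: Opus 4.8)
The plan is to let the multiplication rule \eqref{eqn mult in crossed product} do all the work, once the identity element of $A = R *_{(\alpha,\gamma)} G$ and the canonical embedding $R \hookrightarrow A$ have been pinned down. Specialising the parameter-set identities \eqref{eqn id alpha gamma} at $g=h=1$ gives $\alpha_1 = \iota_{\gamma(1,1)}$ (from the first identity, after cancelling the invertible $\alpha_1$) and $\gamma(1,g) = \gamma(1,1)$ for all $g \in G$ (from the second), while specialising the second identity at $h=k=1$ gives $\gamma(g,1) = \alpha_g(\gamma(1,1))$. A short check with \eqref{eqn mult in crossed product} then shows that $1_A = \gamma(1,1)^{-1}u_1$ and that the canonical embedding is the ring homomorphism $\iota\colon R \to A$, $r \mapsto r\gamma(1,1)^{-1}u_1$ (whose image is the degree-$1$ component $A_1 = Ru_1$). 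One could instead pass to an equivalent parameter set in order to assume $\gamma(1,1)=1$, hence $\alpha_1=\id$ and $u_1 = 1_A$ — this changes neither side of the claimed equivalence, since every inner automorphism $\iota_{r(g)}$ by a unit of $R$ fixes the image of $S$ pointwise ($S$ being central in $R$) — but the computation below is one line either way, so I would just carry the factor $\gamma(1,1)$ along.

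Next, since $R$ is an $S$-algebra the image of $S$ is central in $R$, hence $\iota(S)$ is central in $A_1$; therefore $A$ is $S$-linear if and only if $\iota(s)$ commutes with every homogeneous element $ru_g$, for all $g\in G$ and $r \in R$. Substituting $\iota(s) = s\gamma(1,1)^{-1}u_1$ into \eqref{eqn mult in crossed product} and using $\alpha_1 = \iota_{\gamma(1,1)}$ and $\gamma(1,g) = \gamma(1,1)$ yields $\iota(s)\cdot ru_g = sr\,u_g$, while using $\gamma(g,1) = \alpha_g(\gamma(1,1))$ yields $ru_g\cdot \iota(s) = r\,\alpha_g(s)\,u_g$. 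As $A$ is free as a left $R$-module on $\{u_g\}$, it follows that $\iota(s)$ is central in $A$ if and only if $sr = r\,\alpha_g(s)$ for all $r\in R$ and all $g\in G$. Taking $r=1$ this forces $\alpha_g(s) = s$; conversely, if $\alpha_g(s)=s$ then $r\alpha_g(s) = rs = sr$ by centrality of $s$ in $R$. Hence $A$ is $S$-linear if and only if $\alpha_g(s) = s$ for all $g\in G$ and all $s\in S$.

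It remains to recognise the last condition as the $S$-linearity of $(\alpha,\gamma)$: a ring automorphism $\varphi$ of $R$ lies in $\Aut_S(R)$ precisely when it fixes the image of $S$ pointwise (necessity: $\varphi(s\cdot 1_R)=s\cdot\varphi(1_R)=s\cdot 1_R$; sufficiency: $\varphi(sr)=\varphi(s)\varphi(r)=s\varphi(r)$), so applying this to each $\alpha_g$ closes the equivalence. I expect the only point genuinely needing care to be the bookkeeping with the non-normalised unit $1_A = \gamma(1,1)^{-1}u_1$ and the exact shape of $\iota$; after that it is a direct substitution. Conceptually the statement holds because $\gamma$ takes values in units of $R$ and conjugation by such units fixes the central subring $S$, so the only part of a parameter set that can move $S$ is $\alpha$, and $S$-linearity of the crossed product asserts exactly that it does not.
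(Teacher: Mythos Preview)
Your proof is correct and follows essentially the same route as the paper: reduce to the commutation relation $sr = r\,\alpha_g(s)$ via the multiplication rule, then conclude. The only cosmetic difference is that the paper first normalises to $\gamma(1,1)=1$ (so that $u_1=1_A$ and the embedding is $r\mapsto ru_1$) before computing, whereas you identify $1_A=\gamma(1,1)^{-1}u_1$ explicitly and carry the factor through; you even justify why the normalisation is harmless for both sides of the equivalence, a point the paper leaves implicit.
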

	\begin{proof}
		We first show that we can assume without loss that $\gamma(1,1)=1$. This is achieved by replacing $(\alpha,\gamma)$
		by an equivalent parameter system $(\alpha',\gamma')$, defined via formula \eqref{trafo param sys} with $r(g)=\gamma(1,1)^{-1}$ for all $g$. By formula \eqref{eqn id alpha gamma} for $g=h=1$, $\alpha_1=\iota_{\gamma(1,1)}$, so $\alpha_1(\gamma(1,1))=\gamma(1,1)$.
		Now \eqref{trafo param sys} evaluated at $g=h=1$ with our choice of $r$ yields $\gamma'(1,1)=1$.
		So assume $\gamma(1,1)=1$. It follows that $\alpha_1=\iota_{\gamma(1,1)}=\id_R$. By setting $g=h=1$ in \eqref{eqn id alpha gamma} we obtain $\gamma(1,k)=1$ for all $k\in G$ and by setting $h=k=1$ we obtain $\gamma(g,1)=1$ for all $g\in G$.
		
		All we need to show to prove our claim is that the embedding $R \hookrightarrow R*_{(\alpha,\gamma)} G$ maps $S$ into the centre of 
		$R*_{(\alpha,\gamma)}G$ if and only if $\alpha_g \in \Aut_S(R)$, or, equivalently, $\alpha_g|_S=\id_S$  for all $g\in G$. 
		$S$ gets mapped into the centre of $R*_{(\alpha,\gamma)}G$ if and only if for all $s\in S$, $r\in R$ and $g \in G$ we have
		\begin{equation}
			su_1\cdot r u_g = r u_g\cdot su_1
			\Leftrightarrow
			s\alpha_1(r)\gamma(1,g)u_g = r\alpha_g(s)\gamma(g,1) u_g \Leftrightarrow sr=r\alpha_g(s)
		\end{equation}
		As $S$ is central in $R$, the rightmost equation is satisfied for all $r$ and $s$ if $\alpha_g|_S = \id_S$. In the other direction, setting $r=1$ implies $\alpha_g|_S=\id_S$.
	\end{proof}
	
	\begin{defi}
		Let $S$ be a commutative ring and let $R$ be an $S$-algebra. 
		\begin{enumerate}
		\item If $A$ and $B$ are $S$-linear crossed products of $R$ and $G$, then we say $A$ and $B$ are \emph{weakly equivalent} if $A$ and $B$ are isomorphic as $G$-graded $S$-algebras.
		\item We define an action of $\Aut_S(R)$ on $S$-linear parameter sets as follows: for $\tau\in\Aut_S(R)$ and an $S$-linear parameter set $(\alpha,\gamma)$, define ${^\tau}(\alpha,\gamma)=({^\tau\alpha},{^\tau\gamma})$, where
		$({^\tau}\alpha)_g= \tau\circ \alpha_g\circ \tau^{-1}$ and ${^\tau\gamma}=\tau\circ\gamma$.
		\end{enumerate}
	\end{defi}
	Note that the action of $\Aut_S(R)$ on equivalence classes of $S$-linear parameter sets is well defined.
	\begin{prop}
		Let $S$ be a commutative ring, and let $R$ be an $S$-algebra. There is a bijection between weak equivalence classes of $S$-linear crossed products of $R$ and $G$ and $\Aut_S(R)$-orbits on $S$-linear parameter sets of $G$ in $R$.
	\end{prop}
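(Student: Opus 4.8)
The plan is to exhibit mutually inverse maps between the two sets, so the content is mostly bookkeeping; the argument runs parallel to Külshammer's classification of crossed products (see \cite[Section~4]{KuelshammerDonovan}), additionally keeping track of $S$-linearity throughout. In one direction, a parameter set $(\alpha,\gamma)$ of $G$ in $R$ produces the crossed product $A=R*_{(\alpha,\gamma)}G$ of Definition~\ref{defi crossed product}, and by Proposition~\ref{prop s lin param} this crossed product is $S$-linear precisely when $(\alpha,\gamma)$ is $S$-linear. In the other direction, given an $S$-linear crossed product $A$ of $R$ and $G$, I would choose for each $g\in G$ a homogeneous unit $u_g\in A_g\cap\UU(A)$, with $u_1=1_A$ (possible since $1_A\in A_1\cap\UU(A)$). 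Since $A_g\cdot A_{g^{-1}}\subseteq A_1=R$, one has $A_g=Ru_g=u_gR$ with $u_g$ a free generator over $R$ on either side, so every element of $A_g$ is uniquely $ru_g$. Put $\alpha_g=\iota_{u_g}|_R$ and $\gamma(g,h)=u_gu_hu_{gh}^{-1}\in\UU(R)$; the two identities in \eqref{eqn id alpha gamma} fall out of the associativity and unitality of the multiplication of $A$, the first by conjugating $R$ by $u_gu_h=\gamma(g,h)u_{gh}$ and the second by rewriting both bracketings of $u_gu_hu_k$. That $(\alpha,\gamma)$ is an $S$-linear parameter set is then immediate, either from Proposition~\ref{prop s lin param} applied to $A$ or directly from the centrality of $S$ in $A$, which gives $\alpha_g(s)=u_gsu_g^{-1}=s$ for $s\in S$.

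Next I would check that the second assignment is well defined on the appropriate quotients. Replacing the chosen units $u_g$ by $u_g'=r(g)u_g$ with $r(g)\in\UU(R)$ replaces $(\alpha,\gamma)$ by the parameter set obtained from formula \eqref{trafo param sys} for this $r$; this is a one-line computation from $u_g'\cdot r=\alpha_g'(r)\cdot u_g'$. If $\varphi\colon A\longrightarrow B$ is an isomorphism of $G$-graded $S$-algebras between two $S$-linear crossed products, then $\tau:=\varphi|_R$ is a ring automorphism of $R=A_1=B_1$; it is $S$-linear because $\varphi$ is, and $\varphi(1_A)=1_B$ forces $\varphi$ to fix the image of $S$ pointwise, so $\tau\in\Aut_S(R)$. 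Writing $\varphi(u_g)=r(g)u_g'$ for chosen units on the two sides and comparing $\varphi(u_gru_g^{-1})$ and $\varphi(u_gu_h)$ with the defining relations of $B$ exhibits the parameter set read off from $B$ as being equivalent to ${}^\tau(\alpha,\gamma)$. Hence $A\mapsto[(\alpha,\gamma)]$ descends to a well-defined map from weak equivalence classes of $S$-linear crossed products to $\Aut_S(R)$-orbits of equivalence classes of $S$-linear parameter sets (the objects on which the $\Aut_S(R)$-action was noted to be well defined just above). In the same way the first assignment $(\alpha,\gamma)\mapsto R*_{(\alpha,\gamma)}G$ sends equivalent parameter sets to $G$-graded $S$-algebras identified by an isomorphism that is the identity on $R$, and sends ${}^\tau(\alpha,\gamma)$ to a crossed product isomorphic to $R*_{(\alpha,\gamma)}G$ via $ru_g\mapsto\tau(r)u_g$; so it too descends.

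It then remains to see that the two maps are mutually inverse. Starting from a parameter set $(\alpha,\gamma)$ — which we may assume normalised so that $\gamma(1,1)=1$, as in the proof of Proposition~\ref{prop s lin param}, whence also $\gamma(g,1)=\gamma(1,g)=1$ and the canonical unit $u_1$ equals $1_A$ — the canonical basis elements $u_g$ of $R*_{(\alpha,\gamma)}G$ are homogeneous units, and \eqref{eqn mult in crossed product} gives $u_gru_g^{-1}=\alpha_g(r)$ and $u_gu_h=\gamma(g,h)u_{gh}$, so the second construction returns $(\alpha,\gamma)$. Conversely, starting from an $S$-linear crossed product $A$ with a choice of units $u_g$ and the resulting $(\alpha,\gamma)$, the left-$R$-linear map $R*_{(\alpha,\gamma)}G\longrightarrow A$ sending the basis element $u_g$ to $u_g$ is, by the very definitions of $\alpha$ and $\gamma$ together with \eqref{eqn mult in crossed product}, a homomorphism of $G$-graded rings; it is $S$-linear and restricts on each graded piece to the bijection $Ru_g\leftrightarrow A_g$, hence is an isomorphism of $G$-graded $S$-algebras. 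Thus the two constructions are mutually inverse, which yields the claimed bijection.

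I do not expect a conceptual obstacle; the point requiring care is the bookkeeping — keeping straight which ambiguity on the crossed-product side (the choice of homogeneous units versus the choice of $G$-graded $S$-algebra isomorphism) corresponds to which relation on the parameter side (the equivalence of \eqref{trafo param sys} versus the $\Aut_S(R)$-action), together with the harmless normalisation $\gamma(1,1)=1$ — and verifying at each step that $S$-linearity is preserved. The two facts that make the last part automatic are Proposition~\ref{prop s lin param} and the observation that an isomorphism of $G$-graded $S$-algebras restricts to an element of $\Aut_S(R)$ on the degree-one part.
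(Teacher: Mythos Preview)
Your proof is correct and follows the same approach as the paper; in fact it is considerably more detailed. The paper's proof only explicitly verifies the $\Aut_S(R)$-layer --- exhibiting the $G$-graded $S$-algebra isomorphism $r_gu_g\mapsto\tau(r_g)v_g$ between $R*_{(\alpha,\gamma)}G$ and $R*_{{}^\tau(\alpha,\gamma)}G$, and conversely obtaining $\tau$ as the restriction to $R$ after normalising $\gamma(1,1)=1$ --- while implicitly relying on K\"ulshammer's classical bijection between crossed products and equivalence classes of parameter sets for the rest; you have spelled out that underlying bijection as well.
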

	\begin{proof}
			Given an $S$-linear parameter set $(\alpha,\gamma)$ and 
			a $\tau\in \Aut_S(R)$, it follows from \eqref{eqn mult in crossed product} that the assignment $r_gu_g \mapsto \tau (r_g)v_g$ defines
			an isomorphism of $G$-graded $S$-algebras between $R*_{(\alpha,\gamma)}G = \bigoplus Ru_g$ and
			$R*_{^\tau(\alpha,\gamma)}G = \bigoplus Rv_g$. In the other direction, given such an isomorphism, we may take $\tau$ to be the restriction to $R$ of this isomorphism (after normalising $(\alpha,\gamma)$ in such a way that $\gamma(1,1)=1$, as we did in Proposition~\ref{prop s lin param}, which turns $r\mapsto ru_1$ and $r\mapsto rv_1$ into morphisms of $S$-algebras).
	\end{proof}
	
	\begin{corollary}\label{corollary finitely many crossed prod}
		 If $\Lambda$ is an $\OO$-order in a separable $K$-algebra, and $G$ is a finite group, then there are only finitely many $\OO$-linear crossed products of $\Lambda$ and $G$.
	\end{corollary}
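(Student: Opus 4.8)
The plan is to translate the corollary, via the two preceding propositions, into a finiteness statement about homomorphisms $G\to\Out_\OO(\Lambda)$, and then to feed in the algebraicity of $\Out_\OO(\Lambda)$ from Theorem~\ref{thm_out_algebraic} together with the cohomological finiteness lemma above. (Throughout, ``finitely many crossed products'' is meant up to weak equivalence.) By the preceding proposition, the weak equivalence classes of $\OO$-linear crossed products of $\Lambda$ and $G$ are in bijection with the $\Aut_\OO(\Lambda)$-orbits on $\OO$-linear parameter sets of $G$ in $\Lambda$, so it suffices to bound the number of these orbits. An $\OO$-linear parameter set $(\alpha,\gamma)$ has $\alpha$ valued in $\Aut_\OO(\Lambda)$, hence induces a homomorphism $\omega=\bar\alpha\colon G\to\Out_\OO(\Lambda)$, and since ${}^\tau\alpha_g=\tau\circ\alpha_g\circ\tau^{-1}$ the assignment $(\alpha,\gamma)\mapsto\omega$ intertwines the $\Aut_\OO(\Lambda)$-action on parameter sets with the conjugation action of $\Out_\OO(\Lambda)$ on $\Hom(G,\Out_\OO(\Lambda))$. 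Therefore the number of orbits is at most $\sum_{[\omega]}|H^2(G,{}^\omega\UU(Z(\Lambda)))|$, where $[\omega]$ runs over $\Out_\OO(\Lambda)$-conjugacy classes of homomorphisms $G\to\Out_\OO(\Lambda)$: by the parameter-set lemma above the equivalence classes of parameter sets inducing a fixed $\omega$ are in bijection with $H^2(G,{}^\omega\UU(Z(\Lambda)))$, equivalent parameter sets yield weakly equivalent, hence $\Aut_\OO(\Lambda)$-conjugate, crossed products, and every parameter set whose induced homomorphism lies in $[\omega]$ is $\Aut_\OO(\Lambda)$-conjugate to one inducing $\omega$ itself.

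It then remains to verify two finiteness statements. First, for each $\omega$ the group $H^2(G,{}^\omega\UU(Z(\Lambda)))$ is finite: the centre $Z(\Lambda)$ is a commutative $\OO$-order (it is an $\OO$-order in the centre of the separable $K$-algebra $\Lambda\otimes_\OO K$, which is commutative separable), and $G$ acts on it by $\OO$-algebra automorphisms via $\omega$, inner automorphisms acting trivially on the centre, so this is exactly the cohomology-finiteness lemma. Secondly, there are only finitely many conjugacy classes $[\omega]$: by Theorem~\ref{thm_out_algebraic}, $\Out_\OO(\Lambda)$ is an algebraic group over $k$, so after fixing generators of $G$ the set $\Hom(G,\Out_\OO(\Lambda))$ becomes a Zariski-closed subvariety of a finite power of $\Out_\OO(\Lambda)$ (the defining relations of $G$ translating into closed conditions, exactly as in Proposition~\ref{prop gl0 alg grp}), equipped with a morphic conjugation action of $\Out_\OO(\Lambda)$; the rigidity argument of \cite{KuelshammerDonovan} then shows that this action has only finitely many orbits. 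Combining the two finiteness statements gives the corollary.

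The main obstacle is the second finiteness statement, and this is precisely where the first sections of the paper are used: over a field it was Külshammer's input that $\Hom(G,\Out_k(A))$ has finitely many conjugacy classes, and that rested on $\Out_k(A)$ being an algebraic group; Theorem~\ref{thm_out_algebraic} is exactly what is needed to run the same deformation argument with coefficients in $\OO$. By contrast, the bookkeeping of the first paragraph — that weak equivalence classes of crossed products fibre over conjugacy classes of homomorphisms with fibres controlled by the relevant $H^2$ — is routine given the preceding propositions and the explicit action of $\Aut_\OO(\Lambda)$ on parameter sets.
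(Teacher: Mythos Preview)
Your approach is exactly the one the paper has in mind: the corollary carries no proof because it is meant to follow from the proposition immediately preceding it (weak equivalence classes $\leftrightarrow$ $\Aut_\OO(\Lambda)$-orbits on $\OO$-linear parameter sets), the lemma identifying the fibre over a fixed $\omega$ with $H^2(G,{}^\omega\UU(Z(\Lambda)))$, the cohomology-finiteness lemma, and Theorem~\ref{thm_out_algebraic} combined with K\"ulshammer's orbit-finiteness for $\Hom(G,H)$ when $H$ is an algebraic group over $k$. Your write-up of the bookkeeping (how the $\Aut_\OO(\Lambda)$-action on parameter sets descends to conjugation on $\Hom(G,\Out_\OO(\Lambda))$, and why the orbit count is bounded by the displayed sum) is correct and matches the intended argument.

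There is, however, one point you glide over, and it is a genuine gap---in the corollary's hypotheses rather than in your logic. Both of the finiteness inputs you invoke require $|G|$ to be prime to $p$. The cohomology-finiteness lemma in the paper is stated only under that hypothesis, and it genuinely fails otherwise: for $\Lambda=\OO$ and $G=C_p$ with trivial action one has $H^2(C_p,\OO^\times)\cong \OO^\times/(\OO^\times)^p$, which is infinite (the pro-$p$ group $1+\pi\OO$ is not $p$-divisible). Since $\Aut_\OO(\OO)$ is trivial, this already gives infinitely many weak equivalence classes of crossed products of $\OO$ with $C_p$, so the corollary is false as stated for arbitrary finite $G$. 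Likewise, K\"ulshammer's rigidity argument for the finiteness of conjugacy classes in $\Hom(G,H)$ uses $H^1(G,\operatorname{Lie}(H))=0$, which again needs $p\nmid |G|$ (for $G=C_p$ and $H=\mathbb G_a$ there are infinitely many orbits). You should therefore note explicitly that the argument requires $G$ to be a $p'$-group; this is harmless for the application, since the group $X$ appearing in Theorem~\ref{thm kuelshammer full} and Corollary~\ref{corollary finite upstairs finite downstairs} is always a $p'$-group.
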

	
	If $G$ is a finite group and $N\unlhd G$ is a normal subgroup, then 
	$\OO G$ is a crossed product of $\OO N$ and $X=G/N$. To be more precise: if we fix a map $X\longrightarrow G: x \mapsto [x]$ assigning coset representatives, then the maps $\alpha_x: \OO N \longrightarrow \OO N: a \mapsto [x]\cdot a \cdot [x]^{-1}$ for $x\in X$ and $\gamma(x,y)=[x]\cdot [y]\cdot [xy]^{-1}$ for $x,y \in X$ define an $\OO$-linear parameter set such that $\OO G \cong \OO N *_{(\alpha,\gamma)} X$. This descends to blocks in some cases, as the following corollary shows.
	\begin{corollary}\label{corollary crossed prod}
		If $G$ is a finite group, $N\unlhd G$ is a normal subgroup and $b\in Z(\OO G)\cap \OO N$ is an idempotent, then 
		$\OO G b$ is an $\OO$-linear crossed product of $\OO N b$ and $G/N$.
	\end{corollary}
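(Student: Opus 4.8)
The plan is to restrict, to the idempotent $b$, the parameter set exhibiting $\OO G$ as a crossed product of $\OO N$ and $X = G/N$. Fix coset representatives $X \longrightarrow G:\ x \mapsto [x]$ and let $(\alpha,\gamma)$ be the $\OO$-linear parameter set of $X$ in $\OO N$ with $\alpha_x(a) = [x]\, a\, [x]^{-1}$ and $\gamma(x,y) = [x][y][xy]^{-1} \in N$, so that $\OO G \cong \OO N *_{(\alpha,\gamma)} X$ as described in the paragraph preceding the statement. The first step is to observe that, because $b \in Z(\OO G)$, we have $[x]\, b = b\, [x]$, hence $\alpha_x(b) = b$ for every $x \in X$. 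Consequently each $\alpha_x$ restricts to an $\OO$-algebra automorphism $\alpha'_x$ of $\OO N b$ (a ring with identity element $b$), and $\gamma(x,y) b \in \OO N b$ is a unit with inverse $\gamma(x,y)^{-1} b$.

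Next I would check that $(\alpha',\gamma')$, with $\alpha'_x = \alpha_x|_{\OO N b}$ and $\gamma'(x,y) = \gamma(x,y) b$, is an $\OO$-linear parameter set of $X$ in $\OO N b$. Both identities in \eqref{eqn id alpha gamma} hold for $(\alpha,\gamma)$ in $\OO N$; multiplying each of them by the central idempotent $b$ and using $\alpha_x(b) = b$, they descend verbatim to $(\alpha',\gamma')$ in $\OO N b$, conjugation by $\gamma(x,y)$ restricting to conjugation by $\gamma'(x,y)$. The parameter set is $\OO$-linear because each $\alpha_x$ already lies in $\Aut_\OO(\OO N)$, so each $\alpha'_x$ lies in $\Aut_\OO(\OO N b)$.

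The final step is to produce an isomorphism of $X$-graded $\OO$-algebras $\OO N b *_{(\alpha',\gamma')} X \stackrel{\sim}{\longrightarrow} \OO G b$. Writing $\{v_x \mid x \in X\}$ for the standard basis of the crossed product, the map $\sum_x r_x v_x \mapsto \sum_x r_x [x]$ (where $r_x \in \OO N b$, so $r_x [x] = r_x [x] b \in \OO G b$) is a bijective homomorphism of $\OO$-modules carrying the $x$-component $(\OO N b) v_x$ onto $(\OO G)_x b$. That it respects multiplication follows from \eqref{eqn mult in crossed product} together with $[x] r_y [x]^{-1} = \alpha_x(r_y)$ and $[x][y] = \gamma(x,y)[xy]$. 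Hence $\OO G b$ is the crossed product $\OO N b *_{(\alpha',\gamma')} X$, and since $(\alpha',\gamma')$ is $\OO$-linear, Proposition~\ref{prop s lin param} shows this crossed product is $\OO$-linear (equivalently, one checks directly that $\OO b \subseteq Z(\OO G)\, b \subseteq Z(\OO G b)$).

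The argument is essentially bookkeeping; the only point that uses the full hypothesis $b \in Z(\OO G)$ rather than merely $b \in Z(\OO N)$ is the identity $\alpha_x(b) = b$, without which the $\alpha_x$ would not restrict to $\OO N b$. I do not expect any substantial obstacle beyond verifying that the defining identities of a parameter set survive multiplication by $b$.
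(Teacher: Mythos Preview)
Your proof is correct and follows exactly the approach the paper takes: restrict the parameter set $(\alpha,\gamma)$ for $\OO G \cong \OO N *_{(\alpha,\gamma)} G/N$ by setting $\alpha'_x = \alpha_x|_{\OO N b}$ and $\gamma'(x,y) = \gamma(x,y)\cdot b$, the key observation being that $b\in Z(\OO G)$ forces $\alpha_x(b)=b$. The paper's own proof is a one-line sketch of precisely this; you have simply supplied the routine verifications it omits.
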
 
	\begin{proof}
	If we write $\OO G$ as $\OO N *_{(\alpha,\gamma)} G/N$, then
	one readily sees that $\OO G b \cong 
		\OO N b *_{(\alpha',\gamma')}X$ with $\alpha'_x=\alpha_x|_{\OO N b}$ and 
		$\gamma'(x,y)=\gamma(x,y)\cdot b$.
	\end{proof}
	
	\begin{prop}[Fong correspondence]\label{prop fong}
		Let $G$ be a finite group and and let $N\unlhd G$ be a normal subgroup. Let $B=\OO G b$ be a block of $\OO G$ and let $C=\OO N c$ be a block of $\OO N$ such that $b\cdot c \neq 0$. Set $T=C_G(c)$. Then there is a unique block $E=\OO T e$ of $\OO T$ such that $b \cdot c\cdot e \neq 0$. The block $E$ is called the \emph{Fong correspondent} of $B$ with respect to $C$. It has the following properties:
		\begin{enumerate}
		\item $\OO G e$ is a free right $E$-module of rank $[G:T]$.
		\item The left action of $B$ on $\OO G e$ induces an isomorphism between $B$
		and $\End_{E}(\OO G e) \cong E^{[G:T]\times [G:T]}$.
		\end{enumerate} 
	\end{prop}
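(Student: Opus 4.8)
The plan is to deduce everything from a single structural fact: that $\OO G f$, where $f$ is the sum of the $G$-conjugates of $c$, is a full matrix ring over $\OO T c$. This is the classical approach to Fong--Reynolds, but some care is needed to ensure the ring isomorphisms hold over $\OO$ rather than only after inverting $p$.

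\emph{Step 1: a matrix description of $\OO G f$.} Since $c$ is central in $\OO N$, conjugation by $G$ permutes the block idempotents of $\OO N$, and $T = C_G(c)$ is precisely the stabiliser of $c$; in particular $N \unlhd T$ and $n := [G:T]$ is the size of the $G$-orbit of $c$. Fix coset representatives $1 = g_1, \dots, g_n$ for $G/T$, put $c_i = g_i c g_i^{-1}$ (so that $c_1, \dots, c_n$ are the distinct $G$-conjugates of $c$), and set $f := \sum_i c_i \in Z(\OO N)^G \subseteq Z(\OO G)$. Since $b$ is primitive in $Z(\OO G)$ with $bc \neq 0$ and $fc = c$, we have $bf = b$. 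The elementary ``corner identity'' $c\,\OO G\,c = \OO T c$ (for $g \in G$ one has $cgc = c \cdot (g c g^{-1}) \cdot g$, which vanishes if $g \notin T$ by orthogonality of the $c_i$ and equals $cg$ if $g \in T$), together with the decomposition $\OO G = \bigoplus_i g_i\,\OO T$, shows that $\OO G c = \bigoplus_i g_i\,\OO T c$ is free of rank $n$ as a right $\OO T c$-module with basis $g_1 c, \dots, g_n c$, while $\OO G c_i \cong \OO G c$ as left $\OO G$-modules (via right multiplication by $g_i^{-1}$), so that $\OO G f = \bigoplus_i \OO G c_i \cong (\OO G c)^{\oplus n}$ as left $\OO G$-modules. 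Thus $\OO G c$ is a finitely generated projective generator over $\OO G f$, with $\End_{\OO G f}(\OO G c) \cong (c\,\OO G\,c)^{\opp} = (\OO T c)^{\opp}$, and by Morita theory left multiplication defines an isomorphism of $\OO$-algebras $\Psi : \OO G f \longrightarrow \End_{\OO T c}(\OO G c) \cong \operatorname{Mat}_n(\OO T c)$, the last identification using the basis above. A direct computation with that basis shows $\Psi(z) = (cz) I_n$ for every $z \in Z(\OO G)$ with $zf = z$.

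\emph{Step 2: existence and uniqueness of $e$, and $B \cong E^{[G:T] \times [G:T]}$.} Because $c \in Z(\OO T)$, the ring $\OO T c$ is a product of blocks of $\OO T$ and $Z(\OO T) = Z(\OO T c) \times Z(\OO T (1-c))$; by Step 1, $\Psi$ restricts to a ring isomorphism $Z(\OO G f) \longrightarrow Z(\OO T c)$, $z \mapsto cz$. Hence the blocks $b'$ of $\OO G$ with $b' f = b'$ correspond bijectively, via $b' \mapsto c b'$, to the blocks of $\OO T$ lying under $c$. Set $e := cb$; this is a block idempotent of $\OO T$ with $ce = e$, and since $b$ is central, $be = bcb = bc = e$, so that $bce = e \neq 0$, proving existence. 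If $e'$ is any block of $\OO T$ with $bce' \neq 0$, then, $c$ being a central idempotent of $\OO T$ and $e'$ a primitive one, $ce' \in \{0, e'\}$, and $bce' \neq 0$ forces $ce' = e'$; thus $e' = c b'$ for a unique block $b'$ of $\OO G$ with $b' f = b'$, and $bce' = be' = b c b' = c b b'$, which is $0$ unless $b' = b$, giving $e' = e$. This proves uniqueness. Finally $\Psi(b) = e I_n$, so $B = \OO G b = (\OO G f) b \longrightarrow \operatorname{Mat}_n(\OO T c) \cdot e I_n = \operatorname{Mat}_n(\OO T e) = \operatorname{Mat}_n(E) = E^{[G:T] \times [G:T]}$ is an isomorphism.

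\emph{Step 3: the bimodule $\OO G e$.} Since $ce = e$, we have $\OO G e = (\OO G c) e = \bigoplus_i g_i\,\OO T e$, and as $e \in Z(\OO T)$ each summand $g_i\,\OO T e$ is free of rank one over $E = \OO T e$ with generator $g_i e$; hence $\OO G e$ is free of rank $n = [G:T]$ as a right $E$-module, which is statement (1). Because $be = e$, the left $\OO G$-action on $\OO G e$ factors through $B$, yielding an $\OO$-algebra homomorphism $\rho : B \longrightarrow \End_E(\OO G e)$. Identifying $\End_E(\OO G e) \cong \operatorname{Mat}_n(E)$ via the basis $g_1 e, \dots, g_n e$, a short computation (again using $\Psi(b) = e I_n$) shows that $\rho$ agrees with the restriction $\Psi|_B$ under the identification $\operatorname{Mat}_n(\OO T c) \cdot e I_n = \operatorname{Mat}_n(E)$; in particular $\rho$ is an isomorphism $B \longrightarrow \End_E(\OO G e) \cong E^{[G:T] \times [G:T]}$, which is statement (2).

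The one genuinely non-formal ingredient is the isomorphism $\OO G f \cong \operatorname{Mat}_n(\OO T c)$ of Step 1: over a field it would follow from a dimension count, but over $\OO$ an injective homomorphism of orders of equal $\OO$-rank need not be surjective, so the Morita/double-centraliser argument is genuinely needed here. The remainder is bookkeeping — keeping track of left versus right actions and of opposite rings, and carrying out the elementary verifications of the corner identity, of the formula $\Psi(z) = (cz) I_n$, and of the compatibility $\rho = \Psi|_B$.
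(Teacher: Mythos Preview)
Your proof is correct and complete. The key identity $c\,\OO G\,c=\OO T c$, the freeness of $\OO G c$ over $\OO T c$, and the fullness $\OO G f\cdot c\cdot \OO G f=\OO G f$ are all verified as you indicate, so the double-centraliser/Morita argument legitimately produces the isomorphism $\Psi:\OO G f\stackrel{\sim}{\longrightarrow}\operatorname{Mat}_{[G:T]}(\OO T c)$ over $\OO$ without any appeal to the field case. Steps~2 and~3 then follow formally from $\Psi$ as you explain.

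The paper's proof, by contrast, does not redo the structural argument over $\OO$. It quotes K\"ulshammer's result over $k$ (giving uniqueness of $E$ and freeness of $kGe$ over $kTe$), lifts freeness to $\OO$ using that projective modules over an $\OO$-order are determined by their reductions modulo $\pi$, and then shows that the natural map $B\longrightarrow \End_E(\OO Ge)$ is an isomorphism by checking it modulo $\pi$ and invoking Nakayama. So the paper's argument is a ``reduce mod $\pi$ and lift'' proof, short because it outsources the substance to \cite{KuelshammerpBlocks}, whereas yours is a self-contained direct argument that never passes to $k$. Your approach has the advantage of making explicit the identity $e=bc$ and the matrix isomorphism $\Psi$, at the cost of being longer; the paper's approach is quicker but less transparent about why things work integrally.
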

	\begin{proof}
		Note that the definition of $T$ and $E$ does not depend on whether the coefficient ring is a discrete valuation ring or a field. Therefore, the uniqueness of $E$ follows directly from \cite[Theorem C]{KuelshammerpBlocks}. Moreover, \cite[Theorem C]{KuelshammerpBlocks}
		shows that $kG e$ is a free right $kTe$-module of rank $[G:T]$, which immediately implies that $\OO G e$ is a free $\OO T e$-module of the same rank, as projective modules are determined by their reductions modulo $\pi$ (see \cite[Theorems (30.4) and (30.11)]{CurtisReinerI}). As $\OO Ge$ is projective as a right $E$-module, we have
		$k\otimes \End_{E}(\OO G e) \cong \End_{kTe}(kGe)$ (the map being the canonical one). In particular, the map $B \longrightarrow \End_{E}(\OO G e)$ induced by left multiplication becomes an isomorphism upon tensoring with $k$. Hence the Nakayama lemma implies that the map itself has to be an isomorphism.
	\end{proof}
	
	\begin{defi}\label{defi nat Morita}
		Let $R\in \{\OO, k\}$, let $G$ be a finite group and let $H\subseteq G$ be a subgroup. If $B$ is a block of $R G$ and $C$ is a block of $R H$, then $B$ and $C$ are called \emph{naturally Morita equivalent} if 
		there is an $R$-subalgebra $S \subseteq B$ such that $S\cong R^{n\times n}$ for some $n\in \N$ and multiplication induces an isomorphism
		$C\otimes_R S \longrightarrow B$.
	\end{defi}
	
	\begin{prop}[{\cite[Proposition 2.4]{HidaKoshitani}}]\label{prop Hida Koshitani}
		In the setting of Definition~\ref{defi nat Morita}, two blocks $\OO G b$ and $\OO H c$ are naturally Morita equivalent if and only if $kG b$ and $kHc$ are naturally Morita equivalent. 
	\end{prop}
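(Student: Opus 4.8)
The plan is to transfer the natural Morita equivalence between $\OO$ and $k$ in two different ways: by reduction modulo $\pi$ for the implication ``$\OO$-natural $\Rightarrow$ $k$-natural'', and by lifting idempotents — combined with the fact that projective modules over an $\OO$-order are determined by their reductions (see \cite[Theorems (30.4) and (30.11)]{CurtisReinerI}) — for the converse. The observation underlying both directions is that a natural Morita equivalence between $B$ and $C$ is nothing but an isomorphism $B\cong C\otimes_\OO\OO^{n\times n}$ identifying $C$ with the scalar matrices, equivalently with the centraliser of the distinguished matrix subalgebra $S\cong\OO^{n\times n}$.

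For the easy direction, suppose $S\subseteq B:=\OO G b$ satisfies $S\cong\OO^{n\times n}$ and multiplication induces an isomorphism $C\otimes_\OO S\stackrel{\sim}{\longrightarrow}B$, where $C:=\OO H c$. As $C$ is $\OO$-free of some rank $m$, the algebra $B\cong C\otimes_\OO S$ is free of rank $m$ as a right $S$-module, so $S$ is an $\OO$-direct summand of $B$; hence the reduction $S/\pi S\longrightarrow B/\pi B=kG b$ is injective, with image a subalgebra isomorphic to $k^{n\times n}$. Applying $-\otimes_\OO k$ to $C\otimes_\OO S\longrightarrow B$ and using $C\otimes_\OO k\cong kH c$ and $B\otimes_\OO k\cong kG b$ then exhibits the natural Morita equivalence over $k$.

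For the converse I would start from an isomorphism $\bar B:=kG b\cong(kH c)^{n\times n}$ identifying $kH c$ with the scalar matrices — this copy of $kH c$ being the reduction of the canonical image of $\OO H c$ in $\OO G b$. Let $\bar e\in\bar B$ correspond to the matrix idempotent $E_{11}$; then $\bar e\bar B\bar e\cong kH c$, the idempotent $\bar e$ is full, and $\bar P:=\bar B\bar e$ is free of rank $n$ as a right $\bar e\bar B\bar e$-module. As $\OO$ is complete, lift $\bar e$ to an idempotent $e\in B:=\OO G b$ and put $D:=eBe$, $\bar D:=D/\pi D\cong kH c$; then $e$ is a full idempotent of $B$ (fullness is visible modulo $\pi$ and then lifts by Nakayama). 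Hence $P:=Be$ is projective as a left $B$-module and as a right $D$-module, with $P/\pi P\cong\bar P$ free of rank $n$ over $\bar D$; since projective $D$-modules are determined by their reductions, $P\cong D^n$ as right $D$-modules. The left-multiplication map $B\longrightarrow\End_D(P)$ is injective — its kernel is a two-sided ideal $I$ with $I\cdot BeB=0$, hence $I\cdot B=0$ by fullness and $I=0$ — and reduces modulo $\pi$ to the Morita isomorphism $\bar B\stackrel{\sim}{\longrightarrow}\End_{\bar D}(\bar P)$, so it is surjective by Nakayama and therefore an isomorphism. Thus $B\cong\End_D(D^n)\cong D^{n\times n}\cong D\otimes_\OO\OO^{n\times n}$, which is a natural Morita equivalence between $\OO G b$ and $D$.

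The hard part is the remaining identification $D=eBe\cong\OO H c$. This is \emph{not} a formal consequence of $\bar D\cong kH c$: an $\OO$-order in a separable $K$-algebra is in general not recoverable from its reduction modulo $\pi$, so this step genuinely uses that $C=\OO H c$ is a block of the subgroup $H\subseteq G$ and not merely an abstract complement. I would handle it by choosing the lift $e$ compatibly with the canonical image $\iota(\OO H c)\subseteq\OO G b$ of $\OO H c$, and then matching $eBe$ with $\OO H c$ by a Mackey/double-coset computation inside $\OO G b$ — precisely in the spirit of the Fong-correspondence situation of Proposition~\ref{prop fong}, where the analogous complement is literally $\OO T e$ and Proposition~\ref{prop fong} is already available over $\OO$. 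With that identification in place, the isomorphism $B\cong D\otimes_\OO\OO^{n\times n}$ obtained above is the desired natural Morita equivalence $\OO G b\sim\OO H c$.
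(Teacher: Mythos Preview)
The paper does not actually prove this proposition: it is quoted verbatim from \cite{HidaKoshitani} and used as a black box. So there is no ``paper's own proof'' to compare against, and your sketch has to stand on its own.

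Your forward direction (reduce modulo~$\pi$) is fine, and your lifting argument correctly produces an isomorphism $B\cong (eBe)^{n\times n}$ with $\overline{eBe}\cong kHc$. The problem is exactly where you say it is: identifying $eBe$ with $\OO Hc$. You concede this is the non-formal step and then offer ``a Mackey/double-coset computation inside $\OO Gb$'' as the fix. That is not a proof, and it is not obviously the right mechanism either: nothing in Definition~\ref{defi nat Morita} assumes $H$ is normal or that there is any double-coset structure to exploit, and the analogy with Proposition~\ref{prop fong} is misleading because there the complement is a block of a genuine subgroup singled out in advance, whereas here $eBe$ is defined only after an arbitrary idempotent lift.

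The cleaner route is to avoid an arbitrary lift of $\bar e=E_{11}$ altogether and instead work with the \emph{centraliser} $S':=C_B(C)$ of the canonical copy of $C=\OO Hc$ inside $B$. Since $B/S'$ embeds in a power of $B$ via commutators, $S'$ is an $\OO$-pure sub\-module of $B$, so $\overline{S'}$ injects into $C_{\bar B}(\bar C)=\bar S\cong k^{n\times n}$. If one shows this injection is onto (equivalently $\operatorname{rank}_\OO S'=n^2$, which is what the argument in \cite{HidaKoshitani} establishes), then the multiplication map $C\otimes_\OO S'\to B$ reduces to the given isomorphism over $k$ and is therefore an isomorphism by Nakayama; moreover $S'$ is then an $\OO$-order with $\overline{S'}\cong k^{n\times n}$, hence $S'\cong \OO^{n\times n}$. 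This gives the natural Morita equivalence directly, with the genuine block $C$ appearing as the complement rather than an abstract $eBe$. In short: your outline is sound up to the last step, but the missing identification is the whole content of the proposition, and ``Mackey computation'' is not the right tool to supply it.
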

	
	\begin{corollary}[{\cite[Theorem 7]{KuelshammerAsterisque}}]\label{corollary nat morita}
		Let $G$ be a finite group and let $N\unlhd G$ be a normal subgroup. Let $B=\OO G b$ and $C=\OO N c$ be blocks such that $b\cdot c \neq 0$ (i.e. $B$ covers $C$). Then $B$ and $C$ are naturally Morita equivalent if and only if they share a defect group and $C_G(c)=G$ (i.e. the block $C$ is stable under the action of $G$ by conjugation).
	\end{corollary}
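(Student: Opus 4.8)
The plan is to deduce the statement over $\OO$ from the corresponding statement over the residue field $k$ --- which is \cite[Theorem 7]{KuelshammerAsterisque} --- using Proposition~\ref{prop Hida Koshitani} to pass between the two. Write $\bar b,\bar c$ for the images of the block idempotents $b,c$ under the reduction maps $\OO G\longrightarrow kG$ and $\OO N\longrightarrow kN$, and put $\bar B=kG\bar b$, $\bar C=kN\bar c$. By Proposition~\ref{prop Hida Koshitani}, $B$ and $C$ are naturally Morita equivalent if and only if $\bar B$ and $\bar C$ are. Hence it suffices to check that the remaining data entering the statement --- that $B$ covers $C$, that $B$ and $C$ share a defect group, and that $C_G(c)=G$ --- are unchanged under reduction modulo $\pi$; the corollary will then follow by applying \cite[Theorem 7]{KuelshammerAsterisque} to $\bar B$ and $\bar C$.

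The three invariance checks are routine, and I would carry them out as follows. First, $B$ covers $C$ if and only if $\bar B$ covers $\bar C$: since $c$ is central in $\OO N$ and $b\in Z(\OO G)$, the idempotents $b$ and $c$ commute, so $bc$ is an idempotent; a non-zero idempotent cannot lie in $\bigcap_m\pi^m\OO G=0$ (if $bc\in\pi^m\OO G$ for all $m$ then $bc=(bc)^m$ forces $bc=0$), so $bc\neq 0$ exactly when $\bar b\bar c\neq 0$. Second, $C_G(c)=C_G(\bar c)$: reduction modulo $\pi$ identifies $Z(\OO N)/\pi Z(\OO N)$ with $Z(kN)$ (both have the conjugacy class sums of $N$ as a basis) and restricts to a $G$-equivariant bijection between their primitive idempotents, so $\bar c$ is the block idempotent matching $c$, and $\{g\in G\mid {}^gc=c\}=\{g\in G\mid {}^g\bar c=\bar c\}$; these stabilizers are the groups denoted $C_G(c)$ and $C_G(\bar c)$. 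Third, $B$ and $C$ share a defect group if and only if $\bar B$ and $\bar C$ do, because a block and its reduction modulo $\pi$ have the same defect group. Assembling these with Proposition~\ref{prop Hida Koshitani} and \cite[Theorem 7]{KuelshammerAsterisque} finishes the argument.

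I do not expect the three verifications above to cause trouble; the conceptual content is entirely in the field case \cite[Theorem 7]{KuelshammerAsterisque}, namely Külshammer's structure theory of blocks of crossed products with normal defect group, so in this write-up the genuine difficulty is deliberately imported as a black box. A more self-contained argument over $\OO$ is possible in principle --- using Corollary~\ref{corollary crossed prod} to write $\OO G c$ as a crossed product of $C$ with $G/N$ when $C_G(c)=G$ for the ``$\Leftarrow$'' direction, and the Fong correspondence (Proposition~\ref{prop fong}), which presents $B$ as a full matrix algebra over a block of the proper subgroup $C_G(c)$ when $C_G(c)<G$, for the ``$\Rightarrow$'' direction --- but the hard block-theoretic step in such an argument is precisely what \cite[Theorem 7]{KuelshammerAsterisque} already supplies, so reducing to $k$ is the economical route.
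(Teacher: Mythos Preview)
Your proposal is correct and follows exactly the same strategy as the paper's proof: invoke \cite[Theorem~7]{KuelshammerAsterisque} for the statement over $k$, and use Proposition~\ref{prop Hida Koshitani} to transfer the ``naturally Morita equivalent'' condition between $\OO$ and $k$. The paper's proof is only two sentences and leaves the invariance of the side conditions (covering, shared defect group, $C_G(c)=G$) under reduction modulo $\pi$ implicit; you have simply spelled these routine checks out.
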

	\begin{proof}
		The analogue of the assertion over $k$ is exactly the statement of \cite[Theorem 7]{KuelshammerAsterisque}. Now Proposition~\ref{prop Hida Koshitani} allows us to replace $k$ by $\OO$.
	\end{proof}
	
	\begin{prop}\label{prop crossed prod condens}
		Let $\Lambda$ be an $\OO$-order, let $G$ be a finite group and let $\Gamma$ be an $\OO$-linear crossed product of $\Lambda$ and $G$. If $e\in \Lambda$ is an idempotent with the property that $e\Lambda \cong \alpha(e)\Lambda$ as right $\Lambda$-modules for all $\alpha\in\Aut_\OO(\Lambda)$, then $e\Gamma e$ is an $\OO$-linear crossed product of $e\Lambda e$ and $G$.
	\end{prop}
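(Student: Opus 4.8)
The plan is to reduce everything to a statement about conjugacy of idempotents in $\Lambda$. As a preliminary step I would present $\Gamma$ concretely: since $\Gamma$ is an $\OO$-linear crossed product of $\Lambda=\Gamma_1$ and $G$, I choose for each $g\in G$ a unit $v_g\in\Gamma_g$ of $\Gamma$, taking $v_1=1$. Then $\Gamma_g=\Lambda v_g$, conjugation by $v_g$ defines a ring automorphism $\alpha_g$ of $\Lambda$ with $v_g\lambda=\alpha_g(\lambda)v_g$ for all $\lambda\in\Lambda$, and $\alpha_g\in\Aut_\OO(\Lambda)$ since the central subring $\OO$ of $\Gamma$ commutes with $v_g$. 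From this one reads off that the grading of $\Gamma$ restricts to a $G$-grading on $e\Gamma e$, with homogeneous components $(e\Gamma e)_g=e\Gamma_ge=e\Lambda\alpha_g(e)v_g$ and identity component $(e\Gamma e)_1=e\Lambda e$. Since the image of $\OO$ in $\Gamma$ is central and contained in $e\Lambda e$, the algebra $e\Gamma e$ will be an \emph{$\OO$-linear} crossed product of $e\Lambda e$ and $G$ as soon as it is a crossed product at all; so it suffices to exhibit, for each $g\in G$, a unit of $e\Gamma e$ in $(e\Gamma e)_g$.

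The key point is that for $\lambda\in\UU(\Lambda)$ the element $\xi:=\lambda v_g$ is a unit of $\Gamma$ lying in $\Gamma_g$, and if $\xi$ happens to commute with $e$ then $\xi e=e\xi e\in(e\Gamma e)_g$ is a unit of $e\Gamma e$ with inverse $\xi^{-1}e$ (one checks that $\xi^{-1}$ also commutes with $e$ and that $(\xi e)(\xi^{-1}e)=\xi e\xi^{-1}e=e=(\xi^{-1}e)(\xi e)$, $e$ being the identity of $e\Gamma e$). Now $\xi e=\lambda\alpha_g(e)v_g$ while $e\xi=e\lambda v_g$, so $\xi$ commutes with $e$ exactly when $\lambda\alpha_g(e)\lambda^{-1}=e$. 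Thus everything reduces to the following: for each $g\in G$, the idempotents $e$ and $\alpha_g(e)$ of $\Lambda$ are conjugate by a unit of $\Lambda$.

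To prove this I would use the standard criterion that two idempotents $f,f'$ of a ring $R$ are conjugate by a unit as soon as $fR\cong f'R$ and $(1-f)R\cong(1-f')R$ as right $R$-modules: a direct sum of the two isomorphisms is an automorphism of the regular right module $R$, hence left multiplication by a unit $u$, and $ufu^{-1}=f'$ follows. Apply this with $f=\alpha_g(e)$ and $f'=e$. The isomorphism $e\Lambda\cong\alpha_g(e)\Lambda$ is precisely the hypothesis applied to $\alpha=\alpha_g\in\Aut_\OO(\Lambda)$. For the complementary summands I invoke the Krull--Schmidt theorem for finitely generated $\Lambda$-modules (valid because $\Lambda$ is module-finite over the complete discrete valuation ring $\OO$; see \cite{CurtisReinerI}): from the two decompositions $\Lambda=e\Lambda\oplus(1-e)\Lambda$ and $\Lambda=\alpha_g(e)\Lambda\oplus(1-\alpha_g(e))\Lambda$ together with $e\Lambda\cong\alpha_g(e)\Lambda$, cancellation gives $(1-e)\Lambda\cong(1-\alpha_g(e))\Lambda$. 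This yields a unit $\lambda\in\UU(\Lambda)$ with $\lambda\alpha_g(e)\lambda^{-1}=e$; setting $\xi=\lambda v_g$ and running the computation of the previous paragraph completes the proof.

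The main obstacle is exactly this conjugacy step, and it is the only place where the hypothesis on $e$ enters in an essential way. Note that the hypothesis controls only $e\Lambda$, not the complement $(1-e)\Lambda$, so the Krull--Schmidt cancellation is genuinely needed in order to recover the isomorphism $(1-e)\Lambda\cong(1-\alpha_g(e))\Lambda$; everything else is bookkeeping with the $G$-grading and with units of the crossed product.
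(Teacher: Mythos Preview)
Your argument is correct and follows essentially the same route as the paper: grade $e\Gamma e$ by $G$, pick units $v_g\in\Gamma_g$ inducing $\alpha_g\in\Aut_\OO(\Lambda)$, use the hypothesis to conjugate $\alpha_g(e)$ back to $e$ by some $\lambda\in\UU(\Lambda)$, and then observe that $\lambda v_g$ commutes with $e$ and hence yields a unit in $(e\Gamma e)_g$. The only difference is cosmetic: the paper simply asserts that $e\Lambda\cong\alpha_g(e)\Lambda$ forces $e$ and $\alpha_g(e)$ to be conjugate in $\Lambda$, whereas you spell this out via Krull--Schmidt cancellation of the complementary summands; both are valid over a complete local base such as $\OO$.
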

	\begin{proof}
		Since $e\in \Lambda = \Gamma_1$ is homogeneous, the $\OO$-order $e\Gamma e$ is $G$-graded with $(e\Gamma e)_g=e\Gamma_g e$ for all $g\in G$. In particular $(e\Gamma e)_1 = e\Lambda e$. All we have to show is that $(e\Gamma e)_g$ contains a unit for each $g\in G$. For $g\in G$ let $u_g$ be a (fixed) unit in $\Gamma_g$. Then conjugation by $u_g$ induces an automorphism 
		$\alpha_g$ on $\Gamma_1=\Lambda$, and therefore $u_g e u_g^{-1}=\alpha_g(e)=x_gex_g^{-1}$ for some $x_g\in \UU(\Lambda)$ (as $e\Lambda \cong \alpha_g(e)\Lambda$ implies that $e$ and $\alpha_g(e)$ are conjugate). It follows that $u'_g=u_gx_g^{-1}\in \Gamma_g$ is a unit which commutes with $e$. Now $u'_ge\cdot {u'_g}^{-1}e=e$, that is, $u'_ge\in (e\Gamma e)_g$ is a unit in $e\Gamma e$, as required.
	\end{proof}

	\begin{corollary}\label{corollary crossed prod condens}
		Let $G$ be a finite group and let $\Lambda$ be an $\OO$-order.
		\begin{enumerate}
			\item An $\OO$-linear crossed product $\Gamma$ of $\Lambda^{n\times n}$ and $G$ is isomorphic to $\Gamma'^{n\times n}$, where $\Gamma'$ is an $\OO$-linear crossed product of $\Lambda$ and $G$.
			\item An $\OO$-linear crossed product of $\Lambda$ and $G$ is Morita equivalent to an $\OO$-linear crossed product of $\Lambda_0$ and $G$, where $\Lambda_0$ is the basic order of $\Lambda$.
		\end{enumerate}
	\end{corollary}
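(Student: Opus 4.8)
The plan is to deduce both parts from Proposition~\ref{prop crossed prod condens}, applied to a well-chosen idempotent $e$ of $\Lambda$ (respectively of $\Lambda^{n\times n}$). In each case the only substantive point is the verification of the hypothesis of that proposition, namely that $e\Lambda\cong\alpha(e)\Lambda$ as right modules for all $\alpha\in\Aut_\OO(\Lambda)$ (resp. $\Aut_\OO(\Lambda^{n\times n})$). For part (1) this comes down to the Krull--Schmidt theorem, which is available since $\OO$ is complete; for part (2) it comes down to the fact that an $\OO$-algebra automorphism permutes the isomorphism classes of the indecomposable projectives. Everything else is formal manipulation of gradings and corners.

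For \textbf{(1)} I would take $e=E_{11}$, a primitive idempotent of $\Lambda^{n\times n}$ with $E_{11}\Lambda^{n\times n}E_{11}\cong\Lambda$. Since $E_{11},\ldots,E_{nn}$ are pairwise orthogonal, sum to $1$, and are pairwise conjugate by units (transposition matrices), one gets $\Lambda^{n\times n}\cong(E_{11}\Lambda^{n\times n})^{n}$ as right $\Lambda^{n\times n}$-modules; applying an arbitrary $\alpha\in\Aut_\OO(\Lambda^{n\times n})$, the idempotents $\alpha(E_{11}),\ldots,\alpha(E_{nn})$ have exactly the same properties, so likewise $\Lambda^{n\times n}\cong(\alpha(E_{11})\Lambda^{n\times n})^{n}$. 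Comparing the two decompositions and invoking uniqueness of the decomposition of a lattice into indecomposables, one obtains $E_{11}\Lambda^{n\times n}\cong\alpha(E_{11})\Lambda^{n\times n}$. Hence Proposition~\ref{prop crossed prod condens} applies, and $\Gamma':=E_{11}\Gamma E_{11}$ is an $\OO$-linear crossed product of $\Lambda$ and $G$, with homogeneous components $\Gamma'_g=E_{11}\Gamma_g E_{11}$. Finally, $\Gamma$ carries the complete set of $n\times n$ matrix units $\{E_{ij}\}$ inside $\Gamma_1=\Lambda^{n\times n}$, so the classical isomorphism $\Gamma\to{\Gamma'}^{n\times n}$, $x\mapsto(E_{1i}\,x\,E_{j1})_{i,j}$, is an isomorphism of $\OO$-algebras; it respects the $G$-gradings because the $E_{ij}$ are homogeneous of degree $1$. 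This proves the assertion.

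For \textbf{(2)} I would write $1=e_1+\cdots+e_r+f_{r+1}+\cdots+f_m$ as a sum of pairwise orthogonal primitive idempotents of $\Lambda$, with $e_1\Lambda,\ldots,e_r\Lambda$ a complete irredundant set of representatives of the isomorphism classes of indecomposable projective right $\Lambda$-modules, and set $e=e_1+\cdots+e_r$. Then $e$ is a full idempotent of $\Lambda$ (equivalently, $e\Lambda$ is a progenerator), hence also of $\Gamma$, and $e\Lambda e$ is a basic order $\Lambda_0$ of $\Lambda$. Given $\alpha\in\Aut_\OO(\Lambda)$, each $\alpha(e_i)$ is again primitive, so $\alpha(e_i)\Lambda\cong e_{\pi(i)}\Lambda$ for a unique index $\pi(i)$; if $\pi(i)=\pi(j)$ then $\alpha(e_i)$ and $\alpha(e_j)$ are conjugate by a unit, hence so are $e_i$ and $e_j$ (apply $\alpha^{-1}$ to the conjugating unit), so $i=j$, and $\pi$ is a permutation of $\{1,\ldots,r\}$. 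Therefore $\alpha(e)\Lambda=\bigoplus_i\alpha(e_i)\Lambda\cong\bigoplus_i e_{\pi(i)}\Lambda=e\Lambda$, and Proposition~\ref{prop crossed prod condens} shows that $e\Gamma e$ is an $\OO$-linear crossed product of $e\Lambda e\cong\Lambda_0$ and $G$. Since $e$ is full in $\Gamma$, this crossed product is Morita equivalent to $\Gamma$.

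The main obstacle is exactly the hypothesis check described above; everything else is bookkeeping. In part (1) this is precisely where completeness of $\OO$ enters (through Krull--Schmidt for lattices), and the step easiest to get wrong is keeping the two module decompositions of $\Lambda^{n\times n}$ apart. In part (2) the point that needs a little care is that $\alpha$ genuinely permutes the isomorphism classes of indecomposable projectives, which works because isomorphism of $e_i\Lambda$ with $e_j\Lambda$ is detected by conjugacy of $e_i$ and $e_j$, a condition visibly preserved by $\alpha$. I would also record that the passage from ``$E_{11}\Gamma E_{11}$ is a crossed product'' to ``$\Gamma\cong{\Gamma'}^{n\times n}$'' in part (1) uses nothing beyond the standard fact that a ring carrying a complete set of $n\times n$ matrix units is the $n\times n$ matrix ring over the corner algebra cut out by one of them.
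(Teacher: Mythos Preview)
Your proof is correct and follows essentially the same approach as the paper: in both parts you apply Proposition~\ref{prop crossed prod condens} to the same idempotents (the diagonal matrix unit $E_{11}$ in part~(1), a basic idempotent $e$ in part~(2)), and the hypothesis is verified via Krull--Schmidt in part~(1) and by the invariance of the basic progenerator under automorphisms in part~(2). The paper phrases the latter slightly more conceptually (``$\alpha(e)\Lambda$ is again a basic projective generator, hence isomorphic to $e\Lambda$''), whereas you spell out the induced permutation of the indecomposable projectives, but these are the same argument.
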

	\begin{proof}
		\begin{enumerate}
			\item Let $e_1,\ldots,e_n$ denote the diagonal matrix units in $\Lambda^{n\times n}$ (i.e. the ``standard idempotents''). Since these are conjugate to one another in $\Lambda^{n\times n}$, they are also conjugate in $\Gamma$. As they are orthogonal and sum up to one, it follows that $\Gamma \cong (e_1 \Gamma e_1)^{n\times n}$. All we need to show is that $\Gamma'=e_1\Gamma e_1$ is a crossed product as claimed, and by Proposition~\ref{prop crossed prod condens} this reduces to showing that $e_1\Lambda^{n\times n}\cong \alpha(e_1)\Lambda^{n\times n}$ for any $\alpha\in\Aut_\OO(\Lambda^{n\times n})$. But 
			$$
				\begin{array}{rccccl}
				(e_1\Lambda^{n\times n})^{\oplus n} &\cong& e_1\Lambda^{n\times n} \oplus \ldots \oplus e_n\Lambda^{n\times n}&\cong& \Lambda^{n\times n}&\\&\cong& \alpha(e_1)\Lambda^{n\times n} \oplus \ldots \oplus \alpha(e_n)\Lambda^{n\times n}&\cong& (\alpha(e_1)\Lambda^{n\times n})^{\oplus n}
				\end{array}
			$$
			so our claim follows from the Krull-Schmidt theorem.
			\item We have $\Lambda_0=e\Lambda e$ for an idempotent $e$ with the property that 
			$e\Lambda$ is a basic projective generator for the module category of $\Lambda$, which is unique up to isomorphism. But for any $\alpha\in \Aut_\OO(\Lambda)$ the module $\alpha(e)\Lambda$ is a twist of $e\Lambda$, and therefore also a basic projective generator. That is, $e\Lambda\cong \alpha(e)\Lambda$. Now our claim follows immediately from Proposition~\ref{prop crossed prod condens}. \qedhere
		\end{enumerate}
	\end{proof}
	
	\begin{thm}[{cf. \cite[Theorem in Section 5]{KuelshammerDonovan}}]\label{thm kuelshammer full}
		Let $D$ be a finite $p$-group. If $B$ is a block of $\OO G$ (for some finite group $G$) with defect group isomorphic to $D$, then $B\cong \Gamma^{n\times n}$, where $n\in \N$ and $\Gamma$ is an $\OO$-linear crossed product of $X$ and $B'$ where $X$ and $B'$ are as follows:
		\begin{enumerate}
		\item $X$ is a finite $p'$-group whose order divides $|\Out(D)|^2$
		\item $B'$ is a block of $\OO H$ with defect group $D$, where $H$ is a finite group such that $H=\langle D^h \ | \ h \in H \rangle$. 
		\end{enumerate}
	\end{thm}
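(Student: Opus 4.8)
The plan is to follow the argument of \cite{KuelshammerDonovan} step by step, replacing each appeal to a field-theoretic fact by its $\OO$-analogue established above. First I would recall the classical Clifford-theoretic reduction. Given a block $B = \OO G b$ with defect group $D$, pick a block $C = \OO N c$ covered by $B$, where $N$ is chosen to be a suitable normal subgroup (ultimately we want $N$ to be generated by the $G$-conjugates of a defect group of $C$). Passing to the Fong correspondent via Proposition~\ref{prop fong}, we may replace $G$ by the stabiliser $T = C_G(c)$ and $B$ by the Fong correspondent $E = \OO T e$, at the cost of a matrix algebra: $B \cong E^{[G:T]\times[G:T]}$. By Corollary~\ref{corollary crossed prod condens}(1) it therefore suffices to produce the desired crossed product description for $E$, i.e. we may assume from the outset that the covered block $C$ is $G$-stable.

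Next I would set up the crossed product. With $C = \OO N c$ now $G$-stable and $b$ the block of $\OO G$ covering it (so $b = c$ as central idempotents, and $b \in Z(\OO G) \cap \OO N$), Corollary~\ref{corollary crossed prod} shows that $B = \OO G b$ is an $\OO$-linear crossed product of $C = \OO N b$ and $X = G/N$. The group $X$ is a $p'$-group by a standard argument: a normal subgroup of $G$ containing $N$ whose index is divisible by $p$ would force $C$ to be non-$G$-stable or would change the defect, contradicting $G$-stability together with the defect group hypothesis; more precisely, the classical argument (see \cite{KuelshammerDonovan}) bounds $|X|$ by $|\Out(D)|^2$ using that $X$ acts faithfully on $D$ through $\Out(D)$ in two complementary ways. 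Here nothing depends on the coefficient ring, so the bound carries over verbatim.

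Then comes the base of the reduction. We need $C = \OO N b$ itself to be (naturally Morita equivalent to) the group algebra block $B'$ of a group $H$ generated by conjugates of a defect group. If $N$ is chosen minimal among normal subgroups of $G$ admitting a $G$-stable covered block with defect group $D$, then a defect group of $C$ is again (isomorphic to) $D$, and by Corollary~\ref{corollary nat morita} the block $C$ is naturally Morita equivalent to a block of $\OO H$ for $H = \langle D^h \mid h \in H\rangle$; the point is that $N = \langle D^g \mid g \in N\rangle$ by minimality, so we may take $H = N$ and $B' = C$. (The $\OO$-version of the natural Morita equivalence is exactly Corollary~\ref{corollary nat morita}, which in turn rests on the Hida--Koshitani lifting result Proposition~\ref{prop Hida Koshitani}.) A natural Morita equivalence $C \otimes_\OO \OO^{m\times m} \cong \OO H b'$ then upgrades to an isomorphism of crossed products after applying Corollary~\ref{corollary crossed prod condens}(1) once more to absorb the matrix factor, so $B$ is a matrix algebra over an $\OO$-linear crossed product of $B'$ and $X$, as desired.

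The main obstacle — and the reason Theorem~\ref{thm_out_algebraic} is needed — is not visible in this particular theorem statement but in the finiteness statement it is meant to feed (Theorem~\ref{thm kuelshammer over o}): controlling the crossed products up to weak/Morita equivalence requires that there be only finitely many of them, which is Corollary~\ref{corollary finitely many crossed prod}, and that corollary depends on $\Aut_\OO$ of the relevant order being an algebraic group with finitely many orbits on the parameter sets. For the present theorem the only genuine subtlety is bookkeeping: one must check that all the reduction steps (Fong correspondence, passing to a $G$-stable covered block, the minimal choice of $N$) can be performed so that the defect group is preserved throughout and $H$ really does end up generated by conjugates of $D$. The cleanest way to organise this is to run the classical argument of \cite{KuelshammerDonovan} and note that every ingredient it uses over $k$ has now been supplied over $\OO$: Proposition~\ref{prop fong}, Corollary~\ref{corollary crossed prod}, Corollary~\ref{corollary nat morita}, and Corollary~\ref{corollary crossed prod condens}.
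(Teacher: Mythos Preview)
Your overall strategy—Fong-reduce to a stable covered block, express $B$ as a crossed product, then peel off a matrix factor—matches the paper's, but there is a genuine gap in how you choose the normal subgroup and bound $|X|$. You take $X=G/N$ for a single normal subgroup $N$ (chosen ``minimal''), and then assert that $X$ is a $p'$-group of order dividing $|\Out(D)|^2$ because ``$X$ acts faithfully on $D$ through $\Out(D)$ in two complementary ways''. That faithfulness claim is unjustified: nothing about your $N$ forces $G/N$ to act faithfully on $B_N$ or on $D$. In the paper's argument (and in K\"ulshammer's original), one first iterates Fong correspondence until $B$ covers a \emph{unique} block of $\OO M$ for \emph{every} normal subgroup $M\leq G$. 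One then sets $H=\langle D^g\mid g\in G\rangle$ and introduces an \emph{intermediate} normal subgroup $K=\ker\bigl(G\to\Out_k(k\otimes_\OO B_H)\bigr)$; it is $X=G/K$, not $G/H$ or $G/N$, that embeds into $\Out_k(k\otimes_\OO B_H)$ and hence obeys the bound $|X|\mid |\Out(D)|^2$.

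This is also where Corollary~\ref{corollary nat morita} does real work, not as an optional afterthought: one must pass from the crossed product $B\cong B_K*X$ (which is what Corollary~\ref{corollary crossed prod} gives, since the block idempotents of $B$ and $B_K$ coincide) to a crossed product built on $B_H$. The point is that $H\trianglelefteq K$, the block $B_H$ is $K$-stable, and $B_K$ and $B_H$ share a defect group, so Corollary~\ref{corollary nat morita} yields $B_K\cong B_H^{\,n\times n}$; then Corollary~\ref{corollary crossed prod condens}(1) absorbs the matrix factor to produce $B\cong\Gamma^{n\times n}$ with $\Gamma$ a crossed product of $B_H$ and $X$. Your ``minimality'' shortcut collapses $K$ and $H$ into a single $N$ and thereby loses the mechanism that gives the order bound on $X$; without the intermediate $K$ the argument does not go through.
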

	\begin{proof}
		We will go through the relevant steps of K\"ulshammer's argument, and argue wherever necessary why the individual steps work over $\OO$.
		In general we will not go into too much detail about arguments which do not depend on the choice of coefficient ring.
		In particular, it will not be necessary to reprove the claim on the order of $X$.

		In the first step, consider a proper normal subgroup $N\lhd G$. The blocks of $\OO N$ form a single orbit under the action of $G$ (see \cite[Theorem B]{KuelshammerpBlocks}), and we can choose a representative $B'$, together with its stabilizer $G_{B'}\leq G$. Then, by Proposition~\ref{prop fong}, there is a block of $B^*$ of $\OO G_{B'}$ with defect group $D$ such that $B^*\cong B^{[G:G_{B'}]\times [G:G_{B'}]}$. Repeated application of this argument reduces us to the situation where $B$ covers a unique block of $\OO N$ for every normal subgroup $N$ of $G$. 
		
		Now define $H= \langle D^g \ | \ g \in G \rangle $, and let $B_H$ denote the unique block of $\OO H$ covered by $B$. The group $G$ acts
		on $B_H$ be conjugation, and we let $K$ denote the kernel of the homomorphism $G \longrightarrow \Out_k(k\otimes_\OO B_H)$. By $B_K$ we denote the unique block of $\OO K$ covered by $B$. Define $X=G/K$. Note that $k\otimes_\OO B$, $k\otimes_\OO B_H$, $k\otimes_\OO B_K$ are the algebras ``$A$'', ``$B$'' and ``$C$'' from \cite{KuelshammerDonovan}, and the group $X$ is the same as in \cite{KuelshammerDonovan} as well. Hence $X$ is a $p'$-group whose order divides $|\Out(D)|^2$, and the block idempotents $1_{B_G}$ and $1_{B_K}$ are equal (in general, conjugacy of two idempotents in $\OO G$ can already be checked in $kG$; if one of them is central, then the same is true for equality).
		
		By Corollary~\ref{corollary crossed prod} the $\OO$-order $B$ is a crossed product of $X$ and $B_K$. Now $B_K$ and $B_H$ share the same defect group, $H$ is normal in $K$ and $B_H$ is stable under the action of $K$ on $\OO H$ by conjugation. By Corollary~\ref{corollary nat morita} it follows that $B_K$ and $B_H$ are naturally Morita equivalent. In particular $B_K \cong B_H^{n\times n}$ for some $n\in \N$. So now we know that $B\cong (B_H^{n\times n})*_{(\alpha,\gamma)}X$ for some $n$ and some parameter set $(\alpha,\gamma)$. By Corollary~\ref{corollary crossed prod condens} (1) it follows that $B$ is isomorphic to $\Gamma^{n \times n}$, where $\Gamma$ is a crossed product of $B_H$ and $X$. This completes the proof.
%
	\end{proof}

	From Corollary~\ref{corollary crossed prod condens} (2) we get the following statement, which combined with combined Theorem~\ref{thm kuelshammer full} immediately implies Theorem~\ref{thm kuelshammer over o}. 
	\begin{corollary}\label{corollary finite upstairs finite downstairs}
			Let $G$ be a finite group, and let $B$ be a block of $\OO G$ with defect group  $D\leq G$. Let $H$ be a subgroup of $G$ containing $D$ such that
			\begin{enumerate}
			\item $H=\langle D^h \ | \ h\in H \rangle$ 
			\item There is a block $B'$ of $\OO H$ of defect $D$ such that $B$ is Morita equivalent to $\Gamma^{n\times n}$ for some $\OO$-linear crossed product $\Gamma$ of $B'$ and $X$, where $X$ is a finite group whose order divides $|\Out(D)|^2$. 
			\end{enumerate}
			Our claim is: If $B'$ is Morita equivalent to some fixed basic $\OO$-order $\Lambda_0$, then 
			$B$ is Morita equivalent to an $\OO$-linear crossed product of $\Lambda_0$ and $X$ . 
			In particular, if we fix $\Lambda_0$ (that is, if we fix the  Morita equivalence class of $B'$) and let $G$ and $D$ vary, there are only finitely many possibilities for the Morita equivalence class of $B$.
	\end{corollary}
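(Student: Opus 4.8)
The plan is essentially to chain together the corollaries already proved; almost all the substantive work is done. First I would reduce to a crossed product of $B'$ itself: by hypothesis (2) the algebra $B$ is Morita equivalent to $\Gamma^{n\times n}$, and $\Gamma^{n\times n}$ is Morita equivalent to $\Gamma$ (a matrix algebra over an $\OO$-algebra is Morita equivalent to that algebra, and Morita equivalence of $\OO$-algebras is transitive), so $B$ is Morita equivalent to the $\OO$-linear crossed product $\Gamma$ of $B'$ and $X$.

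Next I would identify $\Lambda_0$ with the basic order of $B'$. Since $\Lambda_0$ is basic and, by hypothesis, Morita equivalent to $B'$, and since the basic order in a Morita equivalence class of $\OO$-orders is unique up to isomorphism, $\Lambda_0$ is isomorphic to the basic order of $B'$. Corollary~\ref{corollary crossed prod condens}(2), applied with the order $B'$ in place of $\Lambda$ and the group $X$ in place of $G$, then shows that $\Gamma$ is Morita equivalent to an $\OO$-linear crossed product $\Gamma_0$ of $\Lambda_0$ and $X$. Together with the previous step this gives that $B$ is Morita equivalent to $\Gamma_0$, which proves the first assertion.

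For the finiteness statement, fix the Morita equivalence class represented by $\Lambda_0$ and let $G$, $D$, $B$, $H$, $B'$, $X$, $\Gamma$ vary subject to the hypotheses. Because defect groups of blocks are Morita invariants, the isomorphism type of $D$, and hence the integer $|\Out(D)|$, is determined by the Morita equivalence class of $B'$, i.e. by $\Lambda_0$. Thus $X$ ranges over the finite set of isomorphism types of finite groups of order dividing $|\Out(D)|^2$. Moreover $\Lambda_0$ is an $\OO$-order in a separable $K$-algebra, being Morita equivalent to the block $B'$, so Corollary~\ref{corollary finitely many crossed prod} applies: for each admissible $X$ there are only finitely many $\OO$-linear crossed products of $\Lambda_0$ and $X$ up to isomorphism. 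Since $B$ is Morita equivalent to one of these finitely many algebras, only finitely many Morita equivalence classes occur.

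The argument is routine bookkeeping; the one external ingredient is the Morita-invariance of the defect group of a block, which is what lets us bound $|X|$ once $\Lambda_0$ is fixed. I expect that point --- together with the (equally standard) verification that $\Lambda_0$ is genuinely the basic order of $B'$ rather than merely Morita equivalent to it --- to be the only places requiring any care, everything else being a direct appeal to Corollaries~\ref{corollary crossed prod condens} and \ref{corollary finitely many crossed prod}.
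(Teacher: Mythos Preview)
Your argument tracks the paper's almost exactly: reduce from $\Gamma^{n\times n}$ to $\Gamma$, invoke Corollary~\ref{corollary crossed prod condens}(2) to pass from $B'$ to its basic order $\Lambda_0$, and then finish the finiteness with Corollary~\ref{corollary finitely many crossed prod}. The identification of $\Lambda_0$ with the basic order of $B'$ is handled correctly.

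There is, however, one genuine overreach in the finiteness step. You assert that ``defect groups of blocks are Morita invariants'' in the sense that the \emph{isomorphism type} of $D$ is determined by the Morita equivalence class of $B'$. That is not known in general; it is an open problem whether Morita equivalent blocks must have isomorphic defect groups. What \emph{is} known, and what the paper actually uses, is the weaker fact that Morita equivalent blocks have defect groups of the same \emph{order} (this order being the largest elementary divisor of the Cartan matrix). Once $|D|$ is fixed there are only finitely many $p$-groups of that order, hence only finitely many possible values of $|\Out(D)|$, and therefore only finitely many possible $X$. Your conclusion survives, but the justification needs to go through the order of $D$ rather than its isomorphism type.
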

	\begin{proof}
		 By Corollary~\ref{corollary crossed prod condens} (2) the crossed product $\Gamma$ is Morita equivalent to a crossed product of $\Lambda_0$ and $X$, which implies the first part of the statement.
		To prove the second part of the statement, it suffices to realise that if two blocks are Morita equivalent, then their defect groups have the same order (this order can be recovered, for example, as the largest elementary divisor of the Cartan matrix). In other words, there are only finitely many possibilities for $D$ and $X$. Corollary~\ref{corollary finitely many crossed prod} does the rest.
	\end{proof}

	\bibliography{refs}
	\bibliographystyle{plain}	
\end{document}